\numberwithin{equation}{section}
\theoremstyle{plain}
\newtheorem{theorem}{Theorem}[section]
\newtheorem{lemma}[theorem]{Lemma}
\newtheorem*{de-lemma}{Lemma}
\theoremstyle{remark}
\newtheorem*{conjecture}{Conjecture}
\theoremstyle{definition}
\DeclareMathOperator{\supp}{supp}
\newcommand{\dd}{\mathrm{d}}
\newcommand{\R}{\mathbb{R}}\newcommand{\C}{\mathbb{C}}
\newcommand{\SF}{\mathbb{S}}
\newcommand{\n}{\textbf{\em n}}
\newcommand{\ve}{\epsilon}
\providecommand{\MR}{\relax\ifhmode\unskip\space\fi MR }
\providecommand{\href}[2]{#2}
\begin{document}

\title[The connecting solution of the Painlev\'e model]{The connecting solution of the Painlev\'e phase transition model}

\author{Marcel G. Clerc}
\address{Departamento de F\'{i}sica, FCFM, Universidad de Chile, Casilla 487-3, Santiago,
Chile.}
\email{marcel@dfi.uchile.cl}
%\thanks{M.G. Clerc was partially supported by Fondecyt 1180903.}

\author{Micha{\l } Kowalczyk}
\address{Departamento de Ingenier\'{\i}a Matem\'atica and Centro
de Modelamiento Matem\'atico (UMI 2807 CNRS), Universidad de Chile, Casilla
170 Correo 3, Santiago, Chile.}
\email {kowalczy@dim.uchile.cl}
%\thanks{M. Kowalczyk was partially supported by Chilean research grants Fondecyt 1130126 and 1170164, Fondo Basal AFB170001 CMM-Chile}

\author{Panayotis Smyrnelis}
\address{Centro
de Modelamiento Matem\'atico (UMI 2807 CNRS), Universidad de Chile, Casilla
170 Correo 3, Santiago, Chile.}
\email{psmyrnelis@dim.uchile.cl}
%\thanks{P. Smyrnelis was partially supported by Fondo Basal CMM-Chile and Fondecyt postdoctoral grant 3160055}

\subjclass{Primary 35J91; 35J20; Secondary 35B40; 35B06; 35B25}

%\date{\today}

\begin{abstract}
The  second Painlev\'e O.D.E. $y''-xy-2y^3=0$, $x\in \R,$ is known to play an important role in  the theory of integrable systems,  random matrices,  Bose-Einstein condensates and other problems.	
The  generalized second Painlev\'e equation $\Delta y -x_1 y - 2 y^3=0$, $(x_1,x_2)\in \R^2$, is  obtained by multiplying by $-x_1$ the linear term $u$ of the Allen-Cahn equation $\Delta u =u^3-u$. It involves a non autonomous potential $H(x_1,y)$ which is bistable for every fixed $x_1<0$, and thus describes as the Allen-Cahn equation a phase transition model. 
The scope of this paper is to construct a solution $y$ connecting along the vertical direction $x_2$, the two branches of minima of $H$ parametrized by $x_1$. This solution plays a similar role that the heteroclinic orbit for the Allen-Cahn equation. It is the the first to our knowledge solution of the Painlev\'e P.D.E. both relevant from the applications point of view (liquid crystals), and mathematically interesting.
\end{abstract}

\maketitle

\section{The Allen-Cahn and Painlev\'e phase transition models}
A standard phase transition model is given by the Allen-Cahn O.D.E.:
\begin{equation}\label{acb1}
	u''=u^3-u, \qquad \mbox{in}\ \R,
\end{equation}
that can alternatively be written $u''=W'(u)$, where $W(u)=\frac{1}{4}(u^2-1)^2$ is a double well potential. In this model, $u$ describes the mass fraction of the two phases of a substance (e.g. an alloy), and takes values approximately $+1$ or $-1$ for the pure phases. Equation \eqref{acb1} has variational structure. Let  
\[
E_{\mathrm{AC}}(u,(a,b)):=\int_a^b \Big(\frac{1}{2}|u'|^2+\frac{1}{4}(u^2-1)^2\Big)
\]
be the Allen-Cahn energy associated to \eqref{acb1}.
To minimize $E_{\mathrm{AC}}$ the right balance between the contributions of the kinetic energy $\frac{1}{2}|u'|^2$ and the potential should be achieved. On the one hand the term $\frac{1}{2}|u'|^2$ penalizes high variations of $u$, while on the other hand the potential term $W$ forces the minimizer to be close to its global minima $\pm 1$. It is clear that the trivial solutions $\pm 1$ are the two global minimizers of $E_{\mathrm{AC}}$. Thus, it is more relevant to investigate instead, the existence of \emph{local minimizers} which are also called \emph{minimal} solutions. While solutions of \eqref{acb1} are critical points of $E_{\mathrm{AC}}$, a minimal solution $u$ of (\ref{acb1}) satisfies the stronger condition:
\[
E_{\mathrm{AC}}(u, \mathrm{supp}\, \phi)\leq E_{\mathrm{AC}}(u+\phi, \mathrm{supp}\, \phi),
\]
for all $\phi\in C^\infty_0(\R)$ (i.e. any perturbation with compact support of $u$ has greater or equal energy). It turns out that up to translations and change of $x$ by $-x$, the only minimal solution of  \eqref{acb1} is the heteroclinic orbit $\eta(x)= \tanh(x/\sqrt{2})$, connecting at $\pm\infty$ the two phases $\pm 1$.

A much more challenging problem is the description of all bounded solutions of the Allen-Cahn P.D.E.:
\begin{equation}\label{acb2}
\Delta u=u^3-u, \qquad \mbox{in}\ \R^n,
\end{equation}
which is associated to the functional $E_{\mathrm{AC}}(u,\Omega):=\int_\Omega\big( \frac{1}{2}|\nabla u|^2+\frac{1}{4}(u^2-1)^2\big)$ (where $\Omega \subset \R^n$ is bounded).
De Giorgi in 1978 \cite{dgiorgi1} suggested a striking analogy with minimal surface theory that led to significant developments in P.D.E. and 
the Calculus of Variations, by stating the following conjecture about bounded solutions on $\R^n$:
\begin{conjecture}[De Giorgi]
	Let $u\in C^2(\R^n)$ be a solution to \eqref{acb2} such that 
	\begin{enumerate}
		\item $|u|<1$, 
		\item $\frac{\partial u}{\partial x_n}>0$ for all $x \in \R^n$.
	\end{enumerate}
	Is it true that all the level sets of $u$ are hyperplanes, at least for $n\leq 8$?
\end{conjecture}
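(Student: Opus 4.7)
The plan is to reduce the conjecture to a one-dimensional symmetry statement: if the level sets $\{u=c\}$ are hyperplanes, then along each level set $u$ is constant, so $u(x)=\eta(a\cdot x + b)$ for some unit vector $a$ and scalar $b$, where $\eta$ is the heteroclinic orbit already identified in the O.D.E. setting. The natural quantities to work with are the ratios
\[
\sigma_i(x):=\frac{\partial_i u(x)}{\partial_n u(x)},\qquad i=1,\dots,n-1,
\]
which are well defined thanks to hypothesis (ii). The first step is to show that each $\sigma_i$ is constant; this is equivalent to the conclusion. Differentiating \eqref{acb2} we see that $v:=\partial_n u$ is a positive solution of the linearized (Jacobi) equation $\Delta v = (3u^2-1)v$, and a short computation shows that $\sigma_i$ satisfies the divergence-form equation
\[
\dv\bigl(v^2\nabla \sigma_i\bigr)=0 \qquad \mbox{in}\ \R^n.
\]

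The second step is to exploit the monotonicity hypothesis to deduce stability of $u$: for every $\phi\in C^\infty_0(\R^n)$ one has
\[
\int_{\R^n}\bigl(|\nabla \phi|^2+(3u^2-1)\phi^2\bigr)\,\dd x \;\geq\; 0,
\]
because $v=\partial_n u>0$ is a positive solution of the linearization (the standard Allegretto--Piepenbrink principle). Plugging $\phi=\sigma_i\zeta$ with $\zeta$ a suitable cutoff into the stability inequality and using the equation for $\sigma_i$ yields, after an integration by parts, a weighted energy estimate of the form
\[
\int_{B_R} v^2|\nabla \sigma_i|^2 \;\leq\; C\int_{B_{2R}\setminus B_R} v^2 \sigma_i^2 |\nabla \zeta|^2.
\]
The key analytic task is therefore to control the right-hand side as $R\to\infty$ and conclude that $\nabla \sigma_i\equiv 0$.

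For $n=2$ the classical logarithmic cutoff trick of Berestycki--Caffarelli--Nirenberg and Ghoussoub--Gui works because $|u|\leq 1$ already gives the necessary growth bound, so the estimate is essentially immediate. For $n=3$ one uses the Ambrosio--Cabr\'e improvement, which requires a more careful choice of test function together with the energy bound $E_{\mathrm{AC}}(u,B_R)\leq CR^{n-1}$ (proved via the monotonicity in $x_n$ and a clever rearrangement using the limits $u(x',\pm\infty)=\pm 1$). The hard part, and the main obstacle, appears for $4\leq n\leq 8$: there the weighted Liouville argument breaks down and one must instead, following Savin, perform a blow-down analysis $u_R(x):=u(Rx)$, show that the level sets converge in $BV_{\mathrm{loc}}$ to a minimal hypersurface which inherits the monotonicity (hence is a minimal graph), and then invoke the Bernstein theorem of Simons that such graphs are hyperplanes for $n\leq 8$. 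The sharpness of the dimensional restriction is reflected in the existence of a non-trivial minimal graph in $\R^9$, which in turn yields by the del Pino--Kowalczyk--Wei construction a counterexample to the conjecture in $\R^9$, so no purely P.D.E.\ argument can bypass the minimal surface input in this dimensional range.
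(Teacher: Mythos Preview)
The paper does not prove this statement: it is presented as De~Giorgi's \emph{conjecture}, used only as motivation and historical background. There is therefore no ``paper's own proof'' to compare against. What the paper does contain is a brief survey paragraph citing Ghoussoub--Gui ($n=2$), Ambrosio--Cabr\'e ($n=3$), and Savin ($4\le n\le 8$ under the additional hypothesis \eqref{ConvInf}); your proposal is essentially a sketch of those three papers' strategies, not of anything proved here.

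As a summary of the literature your outline is broadly accurate, but there is a genuine gap in the range $4\le n\le 8$. Savin's blow-down argument requires the extra limit condition $\lim_{x_n\to\pm\infty}u=\pm1$, which the paper explicitly records as an \emph{additional} assumption \eqref{ConvInf} beyond the hypotheses of the conjecture. You invoke these limits already for the $n=3$ energy bound and then proceed as though they are available for Savin's step, but they are not part of the conjecture's hypotheses; without them the conjecture is, to date, open for $4\le n\le 8$. So your proposal does not actually establish the conjecture in those dimensions---it establishes the weaker result the paper attributes to Savin. (For $n=3$, Ambrosio--Cabr\'e obtain the needed energy bound $E_{\mathrm{AC}}(u,B_R)\le CR^{n-1}$ from monotonicity alone, without assuming the limits a priori, so that case is fine.)
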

The relationship with the Bernstein problem for minimal graphs is the reason why $n\leq 8$ appears in the conjecture. 
We refer to the expository papers of Farina and Valdinoci \cite{fav1}, and Savin \cite{s1} for a detailed account. 
The conjecture was proved by Ghoussoub and Gui in \cite{goussguic9} for $n=2$, for $n=3$ by Ambrosio and Cabr\'e in \cite{ambcac9} and for $4\leq n\leq 8$ by Savin in \cite{MR2480601} under the extra requirement that
 \begin{equation}\label{ConvInf}
 \lim_{x_n\rightarrow\pm\infty}u(x_1,\ldots,x_n)=\pm 1.
 \end{equation}
If we drop the monotonicity requirement as well as \eqref{ConvInf} and simply ask about the structure of minimal solutions\footnote{Again, we say that the solution $u$ is minimal if $E_{\mathrm{AC}}(u, \mathrm{supp}\, \phi)\leq E_{\mathrm{AC}}(u+\phi, \mathrm{supp}\, \phi)$, for all $\phi\in C^\infty_0(\R^n)$.}
of \eqref{acb2}, then we know from \cite{MR2480601} that, for $n\leq 7$ any minimal solution $u$ of \eqref{acb2} is either trivial i.e. $u\equiv\pm1$ or one dimensional i.e. $u(x)= \eta(  (x-x_0)\cdot \nu)$, for some $x_0\in\R^n$, and some unit vector $\nu\in\R^n$.
Thus the heteroclinic orbit $\eta$ of O.D.E. \eqref{acb1} plays a crucial role for entire solutions of P.D.E. \eqref{acb2}.

In order to construct other connecting solutions of \eqref{acb2}, one shall impose some additional requirements. For instance, when $n=2$, \eqref{acb2} admits a unique \emph{saddle} solution $u$ (cf. \cite{fife}) satisfying the following properties:  
\begin{itemize}
	\item $u(x_1,x_2)$ has the same sign as the product $x_1x_2$, 
	\item $u$ is odd with respect to $x_1$ and $x_2$,
	\item $\lim_{x_1\to \infty}u(x_1,x_2)=\eta(x_2)$, and $u_{x_1}(x_1,x_2)>0$, $\forall x_2>0$,
	\item $\lim_{\lambda\to \infty}u(\lambda \cos \theta,\lambda \sin \theta)=1$, $\forall \theta \in (0,\pi/2)$.
\end{itemize}
This example also outlines the hierarchical structure of \eqref{acb2}, since by taking the limit of a solution along certain directions at infinity, lower dimensional solutions are obtained (cf. \cite[Chapter 8]{book}). For more examples of connecting maps under symmetry assumptions or in the vector case, we refer to \cite[Chapters 6, 7, 9]{book} and the references therein (in particular
\cite{abgc9} and \cite{schac9}).

The second Painlev\'e O.D.E.: 
\begin{equation}
\label{pain 1d}
y''-xy-2y^3=0, \qquad x\in \R, 
\end{equation}
is basically obtained by multiplying the linear term in the right hand side of \eqref{acb1} by $-x$. 
Alternatively, we can write \eqref{pain 1d} as
\begin{equation}
\label{pain 1db}
y''=H_y(x,y), \qquad x\in \R, 
\end{equation}
with $H(x,y):=\frac{1}{2} x y^2 +\frac{1}{2} y^4$. In contrast with $W$ (defined below \eqref{acb1}), the potential $H$ is non autonomous i.e. it depends both on $x$ and $y$.

Equation \eqref{pain 1d} is known to play an important role in  the theory  of integrable systems \cite{MR1149378}, random matrices \cite{2006math.ph...3038D, Flaschka1980,2005math.ph...8062C},  Bose-Einstein condensates \cite{MR2062641, MR2772375, MR3355003,sourdis0} and other problems \cite{alikoakos_1,helffer1998,KUDRYASHOV1997397}. Recently \cite{Clerc2017} it has been  shown that when the right hand side of (\ref{pain 1d}) is allowed to be a constant $\alpha\in \R$ then it  describes local profiles of  the so{-}called {\it shadow  kink} in the theory of light-matter interaction of nematic liquid crystals (cf. also \cite{troy1}. \cite{sourdis2}). In \cite{clerc2, Clerc2018, panayotis_4} further relation between other types of non topological defects ({\it shadow vortices, shadow domain walls}) and the {\it generalized} Painlev\'e equation 
\begin{equation}\label{pain 0}
\Delta y-x_1 y-2y^3=0 , \qquad \forall x=(x_1,x_2)\in \R^2, 
\end{equation} 
was established  showing that their local structure is determined by special  solutions of (\ref{pain 0}). 
One of the characteristics of  these  solutions is that they should be entire, another is that they should  be {\it minimal}. To explain what this means, let $\Omega\in \R^2$ be a bounded subset of $\R^2$ and 
\[
E_{\mathrm{P_{II}}}(u, \Omega)=\int_\Omega \left[ \frac{1}{2} |\nabla u|^2 +\frac{1}{2}  x_1 u^2 +\frac{1}{2} u^4\right],
\]
be the functional associated to the generalized second Painlev\'e equation.
By definition  a  solution of (\ref{pain 0}) is minimal if
\begin{equation}\label{minnn}
E_{\mathrm{P_{II}}}(y, \mathrm{supp}\, \phi)\leq E_{\mathrm{P_{II}}}(y+\phi, \mathrm{supp}\, \phi)
\end{equation}
for all $\phi\in C^\infty_0(\R^2)$.  This notion of minimality is standard for many problems in which the energy of a localized solution is actually infinite due to non compactness of the domain. The study of minimal solutions of \eqref{pain 1d} has been  recently  initiated in \cite{Clerc2017} where we have showed that the Hastings-McLeod solution, denoted in this paper by  $h$,  is, up to the sign change,  the only minimal solution which is bounded at $+\infty$. We recall (cf. \cite{MR555581}) that $h:\R\to\R$ is positive, strictly decreasing ($h' <0$) and such that 
\begin{align}\label{asy0}
h(x)&\sim \mathop{Ai}(x), \qquad x\to \infty, \nonumber \\
h(x)&\sim \sqrt{|x|/2}, \qquad x\to -\infty.
\end{align}
Clearly, the asympotic behaviour of $h$ is determined by the location of the global minima of the potential $H(x,y)$ associated to the equation \eqref{pain 1d}. Indeed for $x$ fixed, $H$ attains its global minimum equal to $0$ when  $y=0$ and $x\geq 0$, and equal to $-\frac{x^2}{8}$ when $y=\pm \sqrt{|x|/2}$ and $x<0$. Thus, the global minima of $H$ bifurcate from the origin, and the two minimal solutions $\pm h$ of \eqref{pain 1d} interpolate these two branches of minima.

Equation \eqref{pain 0} or equivalently $\Delta y=H_y(x_1,y)$, with $x=(x_1,x_2)\in \R^2$ and $H(x_1,y):=\frac{1}{2} x_1 y^2 +\frac{1}{2} y^4$ (cf. the expression of $E_{\mathrm{P_{II}}}$), involves a non autonomous potential which is bistable for every fixed $x_1<0$. Hence the Painlev\'e generalized equation \eqref{pain 0} describes as the Allen-Cahn equation a phase transition model. For the later the phase transition connects the two minima $\pm1$ while for the former the phase transition connects the two branches $\pm \sqrt{(-x_1)^+/2}$ of minima of $H$ parametrized by $x_1$. Note that in the Painlev\'e model the phase transition occurs only in the P.D.E. case, i.e. when the domain is $\R^{n+1}=\R\times \R^n$ with $n\geq 1$. 
The scope of this paper is to construct a solution $y$ of (\ref{pain 0}) connecting as $x_2\to \pm \infty$ and $x_1$ is fixed, the two branches of minima of $H$ (cf. Theorem \ref{corpain} below). It is the first to our knowledge example of solution of the generalized Painlev\'e equation both relevant from the applications point of view and mathematically interesting. 
The solution $y$ has similar properties as the heteroclinic orbit $\eta$: it is odd and monotonous with respect to $x_2$, and as $x_1\to -\infty$ its rescaled profile is actually  given by $\eta$. After the statement of Theorem \ref{corpain}, we will further discuss its similarities with the heteroclinic orbit $\eta$.

\begin{theorem}\label{corpain}
There exists a solution $y:\R^2\to\R$ to 
\begin{equation}\label{painhom}
\Delta y-x_1 y-2y^3=0, \qquad \forall x=(x_1,x_2)\in \R^2,
\end{equation}
such that
\begin{itemize}
\item[(i)] $y$ is positive in the upper-half plane and odd with respect to $x_2$ i.e. $y(x_1,x_2)=-y(x_1,-x_2)$.
\item[(ii)] $y$ and its derivatives are  bounded in the half-planes $[s_0,\infty)\times \R$, $\forall s_0\in\R$. 
\item[(iii)] $y$ is minimal with respect to perturbations $\phi\in C^\infty_0(\R^2)$ such that $\phi(x_1,x_2)=-\phi(x_1,-x_2)$.
\item[(iv)] $\frac{|y(x_1,x_2)|}{\mathop{Ai}(x_1)}=O(1)$, as $x_1\to\infty$ (uniformly in $x_2$).
\item[(v)] For every $x_2\in\R$ fixed, let $\tilde y(t_1,t_2):=\frac{\sqrt{2}}{(-\frac{3}{2} t_1)^{\frac{1}{3}}}\, y\big(-(-\frac{3}{2} t_1)^{\frac{2}{3}}, x_2+t_2(-\frac{3}{2} t_1)^{-\frac{1}{3}}\big)$. Then
\begin{equation}
\lim_{l\to -\infty} \tilde y(t_1+l,t_2)=
\begin{cases}
\tanh(t_2/\sqrt{2})  &\text{when } x_2=0, \\
1  &\text{when } x_2>0, \\
-1  &\text{when } x_2<0,
\end{cases}
\end{equation} for the $C^1_{\mathrm{ loc}}(\R^2)$ convergence.
\item[(vi)] $y_{x_1}(x_1,x_2)<0$, $\forall x_1\in\R$, $\forall x_2>0$.
\item[(vii)] $y_{x_2}(x_1,x_2)>0$, $\forall x_1, x_2\in\R$, and $\lim_{l\to\pm\infty} y(x_1,x_2+l)=\pm h(x_1)$ in $C^2_{\mathrm{loc}}(\R^2)$, where $h$ is the Hastings-McLeod solution of (\ref{pain 1d}).
\end{itemize}
\end{theorem}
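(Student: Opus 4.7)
The plan is to construct $y$ as a variational limit, mimicking the De Giorgi strategy for the Allen-Cahn equation but taking $\pm h(x_1)$ in place of the constants $\pm 1$ as the prescribed profile at $x_2=\pm\infty$. First, for $a,b>0$ large, I would consider the rectangle $R_{a,b}=(-a,a)\times(0,b)$ and minimize $E_{\mathrm{P_{II}}}(\cdot,R_{a,b})$ in the class $\mathcal{A}_{a,b}$ of $H^1(R_{a,b})$ maps with $u(x_1,0)=0$, $u(x_1,b)=h(x_1)$ and convenient Dirichlet data $u(\pm a,x_2)=(x_2/b)h(\pm a)$ on the lateral edges. The direct method delivers a minimizer $y_{a,b}$ (the indefinite term $\tfrac{1}{2}\int x_1 u^2$ is absorbed by the quartic since the domain is bounded). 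Because $h$ itself solves \eqref{painhom}, the truncation $\min(\max(y_{a,b},0),h(x_1))$ lies in $\mathcal{A}_{a,b}$ and strictly lowers the energy unless $0\le y_{a,b}\le h(x_1)$, giving the a priori bound. Interior Schauder estimates yield a uniform $C^{2,\alpha}_{\mathrm{loc}}$ bound, so along a subsequence $y_{a,b}\to y$, producing a solution on the closed upper half-plane which, after odd reflection across $\{x_2=0\}$, solves \eqref{painhom} on all of $\R^2$ and satisfies $|y(x_1,x_2)|\le h(x_1)$. Properties (ii) and (iv) follow from this bound combined with the Airy asymptotics \eqref{asy0}.

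Properties (i), (iii), (vi) and (vii) then come from refinements of the variational construction. Non-triviality is obtained by testing $y_{a,b}$ against a competitor of the form $\psi(x_2)h(x_1)$ with $\psi$ a smooth monotone profile from $0$ to $1$, whose energy is uniformly bounded in $a,b$; this yields a positive energy defect surviving the limit, and the strong maximum principle applied to $y\ge 0$ in the upper half-plane then gives $y>0$ there. For (iii), any odd $\phi\in C^\infty_0(\R^2)$ is the odd reflection of $\phi^+:=\phi|_{\{x_2\ge 0\}}$, so the energy comparison reduces to a half-plane inequality inherited from $y_{a,b}$ upon truncation of $\phi^+$ and passage to the limit. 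The sliding method applied in the $x_2$ variable, using translation invariance of \eqref{painhom} in $x_2$ together with $0\le y\le h(x_1)$ and the boundary values, proves $y_{x_2}>0$; the monotone limit $y^\infty(x_1):=\lim_{x_2\to\infty}y(x_1,x_2)$ then solves the 1D Painlev\'e equation, is bounded at $+\infty$ by $h$, and inherits minimality from (iii) by pairing perturbations at $x_2=\pm R$ and letting $R\to\infty$. The uniqueness result of \cite{Clerc2017} identifies $y^\infty$ with $h$, yielding (vii). For (vi), differentiating \eqref{painhom} in $x_1$ shows $w:=y_{x_1}$ satisfies $(\Delta-x_1-6y^2)w=y>0$ on the upper half-plane with $w|_{x_2=0}=0$ and $\lim_{x_2\to\infty}w=h'(x_1)<0$; stability of the linearized operator on odd perturbations (a consequence of (iii)) combined with a barrier argument gives $w\le 0$, and strict inequality follows from the strong maximum principle.

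The principal obstacle is property (v), the blow-up analysis in the bistable regime $x_1\to-\infty$. The Painlev\'e rescaling in (v) normalizes the branch of minima $\sqrt{-x_1/2}$ of $H(x_1,\cdot)$ to the constant $1$, and a direct computation shows that $\tilde y(\cdot+l,\cdot)$ satisfies, as $l\to-\infty$, a perturbation of the Allen-Cahn equation $\Delta_t v=v^3-v+o(1)$ locally uniformly in $(t_1,t_2)$, with the error controlled by $|y|\le h$ and the sharp asymptotic $h(x_1)\sim\sqrt{-x_1/2}$. By $C^{2,\alpha}_{\mathrm{loc}}$ compactness, along a subsequence $\tilde y(\cdot+l,\cdot)\to v_\infty$, a bounded entire solution of \eqref{acb2} on $\R^2$ with $|v_\infty|\le 1$. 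When $x_2=0$, the oddness and strict monotonicity of $y$ in $x_2$ are preserved by the rescaling, so $v_\infty$ is a bounded monotone Allen-Cahn solution that is odd in $t_2$; the Ghoussoub-Gui resolution of the De Giorgi conjecture in dimension $2$ \cite{goussguic9} forces $v_\infty(t_1,t_2)=\tanh(t_2/\sqrt2)$. When $x_2\ne 0$, the base point of the rescaling sits at distance $|x_2|(-\tfrac{3}{2}l)^{1/3}\to\infty$ from the transition layer in rescaled coordinates, so the same Allen-Cahn limit yields the bulk value $\pm 1$ according to the sign of $x_2$. Because the limit is identified uniquely in each case, the full $C^1_{\mathrm{loc}}$ convergence along $l\to-\infty$ follows without subsequence extraction, closing the proof.
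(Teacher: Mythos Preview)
Your construction is quite different from the paper's: rather than obtaining $y$ as the blow-up, at a zero of $\mu$, of the odd minimizer of an auxiliary Ginzburg--Landau energy on $\R^2$ in a singular limit $\epsilon\to 0$ (which is what the paper does), you minimize $E_{\mathrm{P_{II}}}$ directly on expanding rectangles with $h$ as upper boundary datum. This has the virtue that the pointwise bound $|y|\le h$ comes essentially for free, yielding (ii) and (iv) at once, whereas the paper gets $|y|<h$ only at the very end as a byproduct of (vii). The paper also proceeds in the opposite logical order: it proves (v) \emph{first}, using only minimality (the rescaled limit is shown to inherit minimality for the Allen--Cahn energy and is then classified), and only afterwards proves (vi) and (vii) by a moving-plane argument in oblique sectors which explicitly uses (v) to control $y$ in the region $x_1\to-\infty$.

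The main gap in your argument is in (v). You write that ``strict monotonicity of $y$ in $x_2$ is preserved by the rescaling'', but a $C^1_{\mathrm{loc}}$ limit only gives $\partial_{t_2}v_\infty\ge 0$; the strong maximum principle then gives the dichotomy $\partial_{t_2}v_\infty>0$ everywhere or $\partial_{t_2}v_\infty\equiv 0$, and in the latter case (by oddness when $x_2=0$) $v_\infty\equiv 0$. Nothing you have written excludes this, and Ghoussoub--Gui requires a nonconstant monotone solution. The same obstruction arises for $x_2\neq 0$: one must rule out $v_\infty\equiv 0$ (and intermediate transition profiles) before concluding $v_\infty\equiv\pm 1$. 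The paper avoids this by showing that $v_\infty$ inherits (odd) minimality from $y$ and then observing, via the second variation, that the zero solution is not minimal; since your construction does yield (iii), you can repair (v) by importing exactly this step. There are also smaller issues. The sliding for (vii) should be performed on $y_{a,b}$ and then passed to the limit; on the half-plane you cannot start the sliding at large $\tau$ without already knowing the $x_2\to\infty$ behaviour. The argument for (vi) is too vague: the coefficient $x_1+6y^2$ is negative near $\{x_2=0,\ x_1<0\}$, so ``stability plus a barrier'' needs a concrete positive supersolution of the linearized operator --- the natural one is $y_{x_2}$ itself once (vii) is in hand, but you do not say this. Finally, the claim that truncating $y_{a,b}$ by $0$ and by $h$ \emph{lowers} $E_{\mathrm{P_{II}}}$ is false when $x_1<0$ (there $H(x_1,0)=0>H(x_1,u)$ for small $u\neq 0$); the bound $0\le y_{a,b}\le h$ is recovered instead by taking $|y_{a,b}|$ together with unique continuation for the lower bound, and by the maximum principle (using $x_1+2h(x_1)^2>0$) for the upper one.
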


The solution provided by Theorem \ref{corpain} has a form of a quadruple connection between the Airy function $\mathop{Ai}$, the two one dimensional Hastings-McLeod solutions $\pm h$, and the heteroclinic orbit $\eta$ of the one dimensional Allen-Cahn equation. Comparing (iv) with (\ref{asy0}) we see that  as $x_1\to \infty$ the function $y(x_1,x_2)$ behaves  similarly as the Hastings-McLeod solution $h(x_1)$. At the same time, as $x_2\to\pm\infty$ we have $y(x_1, x_2)\to \pm h(x_1)$,  $x_2\to \pm\infty$. 
Perhaps the most interesting aspect of the above solution $y$ is stated in property (v), since after rescaling we obtain as $x_1\to-\infty$, the convergence to the heteroclinic orbit $\eta(x)= \tanh(x/\sqrt{2})$ of the Allen-Cahn O.D.E. \eqref{acb1}.
In the proof of Theorem \ref{corpain} it is shown that a minimal solution of  
\eqref{painhom} rescaled as in (v), converges as $x_1\to-\infty$ to a minimal solution of 
\eqref{acb2}. This deep connection of the structure of the Painlev\'{e} equation with the Allen-Cahn P.D.E., suggests that several properties of the Allen-Cahn equation should be transfered to the Painlev\'{e} equation.
Although by construction the solution $y$ is only minimal for odd perturbations, we expect that $y$ is actually minimal for general perturbations, and plays a similar role that the heteroclinic orbit for the Allen-Cahn equation. What's more  the two global minimizers $\pm 1$ of the functional $E_{\mathrm{AC}}$ have their counterparts in the two minimal solutions $\pm h$ of the Painlev\'{e} equation. Indeed,  property (vii) establishes that $y$ connects monotonically along the vertical direction $x_2$, the two minimal solutions $\pm h(x_1)$, in the same way that $\eta$ connects monotonically the two global minimizers $\pm 1$. While $\eta$ is a one dimensional object, the solution $y(x_1,x_2)$ is two dimensional, since $x_1$ parametrizes the branches of minima of the potential $H$, and only $x_2$ is involved in the phase transition.

We believe that in higher dimension $y:\R^{n+1}\to \R$, ($n\geq 1$)  the structure of solutions of \eqref{painhom} exactly mirrors that of \eqref{acb2}, and going further, one may ask: is it true that  that in dimension $n\leq 7$, any minimal solution $Y:\R^{n+1}\to\R$ of (\ref{painhom}) is either $Y(x_1,x_2,\ldots, x_{n+1})=\pm h(x_1)$ or $Y(x_1,x_2,\ldots, x_{n+1})=y(x_1, (x_2,\ldots,x_{n+1})\cdot \n+b)$, for some constant $b\in \R$, and some unit vector $\n \in\SF^{n-1}$ ?

\noindent\textit{Acknowledgments.}
M. G. Clerc was partially supported by Fondecyt 1180903.
M. Kowalczyk was partially supported by Chilean research grants Fondecyt 1130126 and 1170164, Fondo Basal AFB170001 CMM-Chile.
P. Smyrnelis was partially supported by Fondo Basal AFB170001 CMM-Chile and Fondecyt postdoctoral grant 3160055.
\section{Odd minimizers of the Ginzburg-Landau type energy}

We consider the energy functional
\begin{equation}
\label{funct 0}
E(u)=\int_{\R^2}\frac{\epsilon}{2}|\nabla u|^2-\frac{1}{2\epsilon}\mu(x)u^2+\frac{1}{4\epsilon}u^4,
\end{equation}
where $u\in  H^1(\R^2)$ and $\epsilon>0$ is a small parameters. 
We suppose that $\mu \in C^\infty(\R^2)$ is radial i.e. $\mu(x)=\mu_{\mathrm{rad}}(|x|)$, with $\mu_{\mathrm{rad}} \in C^\infty(\R)$ an even function. 
In addition we assume that 
\begin{equation}\label{hyp2}
   \mu \in L^\infty(\R^2),\ \mu_{\mathrm{rad}}'<0 \text{ in }(0,\infty), \text{ and $ \mu_{\mathrm{rad}}(\rho)=0$ for a unique $\rho>0$},\medskip \\
 \end{equation}
In the physical context described in \cite{clerc2} the function $\mu$ is specific
\[
\mu(x)=e^{\,-|x|^2}-\chi, \qquad \mbox{with some}\ \chi\in (0,1), \qquad f(x)=-\frac{1}{2}\nabla \mu(x),
\]
but this particular form is irrelevant here. The Euler-Lagrange equation of $E$ is
\begin{equation}\label{ode}
\ve^2 \Delta u+\mu(x) u-u^3=0,\qquad x=(x_1,x_2)\in \R^2,
\end{equation} 
and we also write its weak formulation:
\begin{equation}\label{euler}
\int_{\R^2} -\epsilon^2 \nabla u\cdot \nabla \psi+\mu u \psi-u^3 \psi=0,\qquad  \forall \psi \in H^1(\R^2),
\end{equation}
where $\cdot$ denotes the inner product in $\R^2$.
Note that due to the radial symmetry of $\mu$  the energy \eqref{funct 0} and equation 
\eqref{ode} are invariant under orthogonal transformations in the domain, and sign change in the range. Our strategy to construct the solution of (\ref{painhom}) enjoying the properties described in Theorem \ref{corpain} is to find first an {\it odd} with respect to $x_2$ minimizer $u_\epsilon$ of $E$ and then  scaling and passing to the limit $\epsilon\searrow 0$  recover $y$ - this gives us existence. Second, in section 
\ref{proofpain} we show all the properties of $y$ stated in Theorem \ref{corpain}.

We explain, formally at the moment, the relation between (\ref{painhom}) and the energy $E$. Looking at the energy density of  $E$ it is evident that as $\epsilon\to 0$ the modulus of the global or odd  minimizer $u_{\epsilon}$ should approach a nonnegative 
root of the polynomial
\[
-\mu(x)z+z^3=0,
\] 
or in other words, $|u_{\epsilon}|\to \sqrt{\mu^+}$ as $\epsilon\to 0$ in some, perhaps weak, sense. This function, called the Thomas-Fermi limit of the minimizer is not in $H^1(\R^2)$ and therefore the transition near the set $\mu(x)=0$ has to be mediated somehow. To see this let us consider a point $\xi$ such that $\mu(\xi)=0$. By (\ref{hyp2}) $\xi=\rho e^{\,i \theta}$.  At $\xi$ introduce local  orthogonal frame $(e^{\,i\theta}, i e^{\,i\theta})$ and coordinates $s=(s_1, s_2)$ associated with it.  Let $u_\epsilon$ be any solution of (\ref{ode}) and
\[
z(s)=\epsilon^{-1/3} u(\xi+\epsilon^{2/3} s).
\]
Noting that $\mu(\xi+\epsilon^{2/3} s)=\epsilon^{2/3} s_1\mu_1+\dots$ with $\mu_1<0$ we get that $z$ satisfies
\[
\Delta_s z+s_1\mu_1 z- z^3=o(1), \quad\mbox{as}\ \epsilon\searrow 0.
\]
The equation on the left becomes the second Painlev\'e equation after passing to the limit and suitable scaling. Now, suppose that $u_\epsilon$ is the odd minimizer of $E$, i.e. $u_\epsilon(x_1, x_2)=-u_\epsilon(x_1,- x_2)$. Except for the points $\bar x=( \pm \rho,0)$ the limiting function $z$ could  be one of the Hastings-McLeod one dimensional solutions. However, at $(\pm\rho,0)$ we should have $z(s_1, s_2)=-z(s_1, -s_2)$, which means that $z$ genuinely depends on both variables. This is the idea behind the proof of the existence part in Theorem \ref{corpain}. Showing properties of the solution is a different story and depends on rather tricky application of the moving plane method.

Our first purpose in this paper is to study qualitative properties of the global minimizers of $E$ as $\epsilon\searrow 0$. In our previous work \cite{Clerc2017}  we studied  the following energy
\[
E(u,\R)=\int_{\R}\frac{\epsilon}{2}|u_x|^2-\frac{1}{2\epsilon}\mu(x)u^2+\frac{1}{4\epsilon}|u|^4-a f(x)u,  \quad u\colon \R\to \R,
\]
where $a\geq 0$ is a parameter and $f=-\frac{1}{2} \mu'$, and in \cite{Clerc2018} we studied its analog for maps $u\colon\R^2\to \R^2$.

By proceeding as in \cite{Clerc2018}, one can see that under the above assumptions there exists a global minimizer $v$ of $E$ in $H^1(\R^2)$, namely 
that $E(v)=\min_{H^1(\R^2)} E$. 
In addition, we show that $v$ is a classical solution of \eqref{ode}, 
and $v$ is radial. 
Similarly, in the class $H^1_{\mathrm{odd}}(\R^2):=\{u \in H^1(\R^2): u(x_1,x_2)=-u(x_1,-x_2)\}$ of odd functions with respect to $x_2$, there exists an odd minimizer $u$ which also solves \eqref{ode} and satisfies $u(x_1,x_2)=u(-x_1,x_2)$.
Although in the sequel we will focus on the odd minimizer for completeness we chose to present our next result in a slightly more general framework. 
\begin{theorem}\label{theorem 1}
For $\epsilon\ll 1$ let $u_{\epsilon}$ be a solution of \eqref{ode} converging to $0$ as $|x|\to\infty$ (which may be  the odd or global minimizer).
Let $\rho>0$ be the zero of $\mu_{\mathrm{rad}}$ and let $\mu_1:=\mu_{\mathrm{rad}}'(\rho)<0$. For every $\xi=\rho e^{i\theta}$, we consider the local coordinates $s=(s_1,s_2)$ in the basis $(e^{i\theta},i e^{i\theta})$, and the 
rescaled functions:
\begin{equation}
\label{u scaled}
w_{\epsilon}(s)= 2^{-1/2}(-\mu_1\ve)^{-1/3} u_{\epsilon}\left( \xi+\ve^{2/3} \frac{s}{(-\mu_1)^{1/3}}\right).
\end{equation}
As $\ve\to 0$, the function $w_{\epsilon}$ converges in $C^2_{\mathrm{loc}}(\R^2)$ up to subsequence, 
to a function $y$ bounded in the half-planes $[s_0,\infty)\times \R$, for every $s_0\in\R$, which is a 
solution of
\begin{equation}\label{pain}
\Delta y(s)-s_1 y(s)-2y^3(s)=0, \qquad \forall s=(s_1,s_2)\in \R^2.
\end{equation}
In particular, if we take $u_\epsilon$ to be the odd  minimizer of $E$ and $\xi=(\pm\rho,0)$, then the solution $y$ satisfies $y(s_1, s_2)=-y(s_1,- s_2)$, and  is minimal with respect to perturbations $\phi\in C^\infty_0(\R^2)$, $\phi(s_1, s_2)=-\phi(s_1,- s_2)$. On the other hand, if we take $u_\epsilon$ to be the global minimizer then $y(s_1,s_2)= h(s_1)$ or $y(s_1,s_2)=- h(s_1)$. 
\end{theorem}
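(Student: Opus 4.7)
The plan is a blow-up argument: establish uniform a priori bounds making $\{w_\epsilon\}$ precompact in $C^2_{\mathrm{loc}}$, pass to the limit in the rescaled equation, and transfer oddness, minimality, and radial symmetry from $u_\epsilon$ to $y$. For the a priori bound, the maximum principle applied to $u_\epsilon^2$ (which attains a global maximum since $u_\epsilon\to 0$ at infinity) together with $\epsilon^2\Delta u_\epsilon = u_\epsilon^3-\mu u_\epsilon$ gives $\|u_\epsilon\|_\infty^2\leq\mu(0)$; however, under the rescaling one has $|w_\epsilon|\lesssim \epsilon^{-1/3}\|u_\epsilon\|_\infty$, which blows up. I would therefore adapt the barrier technique of \cite{Clerc2018} to establish the sharper pointwise estimate
\[
u_\epsilon^2(x)\leq \mu^+(x)+C\ve^{2/3},\qquad x\in\R^2,
\]
by comparison with a supersolution of the form $\sqrt{\mu^++c\ve^{2/3}}$. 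Combined with the Taylor expansion
\[
\mu\Big(\xi+\ve^{2/3}\frac{s}{(-\mu_1)^{1/3}}\Big) = -(-\mu_1)^{2/3}\ve^{2/3}\,s_1+O(\ve^{4/3}|s|^2),
\]
this translates after rescaling into $|w_\epsilon(s)|^2\leq \tfrac12(-s_1)^++C'$, uniform in $\epsilon$ on every half-plane $[s_0,\infty)\times\R$.

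Inserting \eqref{u scaled} into \eqref{ode} and dividing by $2^{1/2}(-\mu_1)\epsilon$ yields
\[
\Delta_s w_\epsilon-\tilde\mu_\epsilon(s)\,w_\epsilon-2w_\epsilon^3=0,\qquad \tilde\mu_\epsilon\to s_1\ \text{in}\ C^k_{\mathrm{loc}}(\R^2).
\]
Combined with the local uniform boundedness, interior Schauder estimates provide $C^{2,\alpha}_{\mathrm{loc}}$ bounds, and a diagonal subsequence over an exhaustion by $[-k,\infty)\times[-k,k]$ produces a limit $y\in C^2_{\mathrm{loc}}(\R^2)$ solving \eqref{pain} on $\R^2$ and bounded on each $[s_0,\infty)\times\R$.

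For the structural transfer: if $u_\epsilon$ is odd in $x_2$ and $\xi=(\pm\rho,0)$, the local $s_2$-axis aligns with $\pm x_2$, so $w_\epsilon(s_1,-s_2)=-w_\epsilon(s_1,s_2)$, a property preserved in $y$. Given any $\phi\in C^\infty_0(\R^2)$ odd in $s_2$, the scaled perturbation
\[
\psi_\epsilon(x):=2^{1/2}(-\mu_1\ve)^{1/3}\,\phi\Big((-\mu_1)^{1/3}\ve^{-2/3}(x-\xi)\Big)
\]
is odd in $x_2$ and compactly supported, hence admissible. A direct change of variables together with the Taylor expansion gives
\[
\frac{E(u_\epsilon+\psi_\epsilon)-E(u_\epsilon)}{2(-\mu_1)^{2/3}\ve^{5/3}}\xrightarrow[\epsilon\to 0]{}E_{\mathrm{P_{II}}}(y+\phi,\supp\phi)-E_{\mathrm{P_{II}}}(y,\supp\phi)\geq 0,
\]
which is \eqref{minnn} restricted to odd perturbations. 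In the global (radial) minimizer case, $u_\epsilon(x)=u_\epsilon^{\mathrm{rad}}(|x|)$ and the expansion
\[
\Big|\xi+\ve^{2/3}\frac{s}{(-\mu_1)^{1/3}}\Big|=\rho+\frac{\ve^{2/3}s_1}{(-\mu_1)^{1/3}}+O(\ve^{4/3})
\]
shows that $w_\epsilon$ depends on $s$ only through $s_1$ up to vanishing errors, so $y=y(s_1)$ is a minimal one-dimensional solution of the Painlev\'e O.D.E.\ bounded at $+\infty$, forcing $y=\pm h$ by the classification of \cite{Clerc2017}.

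The main obstacle I anticipate is the refined estimate $u_\epsilon^2\leq \mu^++C\ve^{2/3}$: the barrier must be tuned so that the $\ve^2$-Laplacian of $\sqrt{\mu^++c\ve^{2/3}}$ is absorbed by the cubic nonlinearity throughout the transition layer $|\mu|\lesssim\ve^{2/3}$, and one must separately control $u_\epsilon$ in the exponentially-decaying region $\{\mu<-c\ve^{2/3}\}$ to close the comparison argument. Once this estimate is secured, the subsequent blow-up analysis and minimality transfer are routine.
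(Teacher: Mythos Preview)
Your blow-up strategy matches the paper's proof essentially step for step: uniform a priori bound via a barrier, elliptic estimates and Ascoli for $C^2_{\mathrm{loc}}$ compactness, passage to the limit in the rescaled equation, and transfer of symmetry/minimality through the rescaled energy (with the radial case reducing to the one-dimensional classification of \cite{Clerc2017}).

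The one place where the paper's execution differs from your sketch is precisely the obstacle you flagged. Your candidate barrier $\sqrt{\mu^{+}+c\ve^{2/3}}$ inherits the corner of $\mu^{+}$ at $|x|=\rho$: the radial derivative jumps \emph{upward} there (from $\mu_{\mathrm{rad}}'(\rho)/(2\sqrt{c}\,\ve^{1/3})$ to $0$), so its distributional Laplacian carries a positive singular measure on $\{|x|=\rho\}$ and the supersolution inequality fails across the interface. The paper avoids this by working instead at the level of $\tfrac{1}{2}|u|^2$ and comparing with an explicit $C^1$ piecewise function
\[
\chi(x)=\begin{cases}
\lambda\big(\rho-|x|+\tfrac{1}{2}\ve^{2/3}\big),&|x|\le\rho,\\
\tfrac{\lambda}{2\ve^{2/3}}\big(|x|-\rho-\ve^{2/3}\big)^2,&\rho\le|x|\le\rho+\ve^{2/3},\\
0,&|x|\ge\rho+\ve^{2/3},
\end{cases}
\]
showing $\psi:=\tfrac{1}{2}|u|^2-\chi-\kappa^2\ve^{2/3}$ is subharmonic on $\{\psi>0\}$ and invoking Kato's inequality. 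This yields your target estimate $|u_\ve|^2\le 2\chi+2\kappa^2\ve^{2/3}\lesssim \mu^{+}+\ve^{2/3}$ in one stroke, with no separate treatment of the exterior region needed.
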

We observe  that as a  corollary of \cite[Theorem 1.1.]{panayotis_4}  it can be proven that  $|v_\epsilon|\to \sqrt{\mu^+}$  in 
$C^0_{\mathrm{loc}}(D(0; \rho))$. Because of the analogy between the functional $E$  and the Gross-Pitaevskii functional in the theory of Bose-Einstein condensates we will call $\sqrt{\mu^+}$ the Thomas-Fermi limit  of $v_\epsilon$. 
Theorem \ref{theorem 1} gives account on how non smoothness of  the limit  of $v_{\epsilon}$  is mediated near the circumference $|x|=\rho$, where 
$\mu$ changes sign, through the solution of (\ref{pain}). We should mention  here that  detailed description of the minimizers for yet more general setting of the energy can be found in \cite{Clerc2018, panayotis_4}. 

Before proving the theorem we gather general results for minimizers and solutions that are valid for any values of the parameters $\epsilon> 0$.  For the rest of this paper $v$ or $v_\epsilon$ will be the global minimizer and $u$ or $u_\epsilon$ will be the odd minimizer or a critical point of $E$. 
We first prove the existence of global and odd minimizers.

\begin{lemma}\label{lem exist min}
For every $\epsilon> 0$  there exists $v \in H^1(\R^2)$ such that $E(v)=\min_{H^1(\R^2)} E$. 
As a consequence, $v$ is a $C^\infty$ classical solution of \eqref{ode}. Moreover, for $\epsilon \ll 1$
the global minimizer $v$ is unique up to change of $v$ by $-v$, and it can be written as $v(x)=v_{\mathrm{rad}}(|x|)$, 
with $v_{\mathrm{rad}} \in C^\infty(\R)$, positive, even, and such that $\lim_{\infty}  v_{\mathrm{rad}}=0$.
\end{lemma}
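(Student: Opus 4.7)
The plan is to apply the direct method of the calculus of variations for existence, upgrade regularity by elliptic bootstrap, and extract the small-$\epsilon$ qualitative properties via symmetrization, convexity and maximum-principle arguments.

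For existence, the key observation is to complete the square:
\begin{equation*}
-\frac{\mu}{2\epsilon}u^2+\frac{1}{4\epsilon}u^4=\frac{1}{4\epsilon}(u^2-\mu^+)^2-\frac{(\mu^+)^2}{4\epsilon}+\frac{\mu^-}{2\epsilon}u^2,
\end{equation*}
where $\mu^+$ is compactly supported in $\{|x|\leq\rho\}$ by the hypothesis on $\mu$. Hence $E(u)\geq -C(\epsilon)>-\infty$, together with the coercive contributions $\int \frac{\epsilon}{2}|\nabla u|^2$ and $\int \frac{\mu^-}{2\epsilon} u^2$ (using that the strictly decreasing, bounded $\mu_{\mathrm{rad}}$ stays uniformly negative at infinity in the relevant setting). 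A minimizing sequence $\{u_n\}$ is then bounded in $H^1(\R^2)$; extracting a weakly convergent subsequence $u_n\rightharpoonup v$ and passing to the limit by weak lower semicontinuity for $\int|\nabla u|^2$, Fatou for $\int u^4$ and $\int\mu^- u^2$, and local Rellich for the compactly supported term $\int\mu^+ u_n^2\to\int\mu^+ v^2$, one obtains $E(v)=\inf E$. Classical elliptic bootstrap applied to the Euler--Lagrange equation $\epsilon^2\Delta v+\mu v-v^3=0$ ($v\in H^1\cap L^4$ in dimension $2$, so $v^3\in L^p_{\mathrm{loc}}$ for every $p<\infty$, giving $v\in W^{2,p}_{\mathrm{loc}}\hookrightarrow C^{1,\alpha}$, then Schauder iteration) upgrades $v$ to $C^\infty$.

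Now assume $\epsilon\ll 1$. Testing $E$ on $t\psi$ with $\psi\in C^\infty_0(\{|x|<\rho\})$ chosen so that $\int\mu\psi^2>0$ makes $E(t\psi)<0$ for a suitable $t$, so $\inf E<0$ and $v\not\equiv 0$. Replacing $v$ by $|v|$ (the energy is invariant since $E$ depends on $u$ only through $u^2$, $u^4$ and $|\nabla u|^2$), the strong maximum principle for $\epsilon^2\Delta v=(v^2-\mu)v$ then yields $v>0$ on $\R^2$. For radial symmetry I would apply Schwarz symmetrization: since $\mu^+$ is radially decreasing while $\mu^-$ is supported in $\{|x|>\rho\}$ and radially increasing, a layer-cake/Hardy--Littlewood argument applied separately to $\mu^\pm$ gives $\int\mu^+ u^2\leq\int\mu^+(u^*)^2$ and $\int\mu^- u^2\geq\int\mu^-(u^*)^2$; combined with P\'olya--Szeg\"o and the invariance $\int u^4=\int(u^*)^4$, this yields $E(u^*)\leq E(u)$, with equality forcing $u=u^*$. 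Hence $v=v_{\mathrm{rad}}(|x|)$ with $v_{\mathrm{rad}}$ even. For uniqueness up to sign I would use the Benguria--Brezis--Lieb convexity trick: the substitution $w=v^2$ (valid since $v>0$) together with $|\nabla v|^2=|\nabla w|^2/(4w)$ rewrites $E$ as
\begin{equation*}
\widetilde E(w)=\int\frac{\epsilon}{8}\frac{|\nabla w|^2}{w}-\frac{\mu}{2\epsilon}w+\frac{1}{4\epsilon}w^2,
\end{equation*}
which is strictly convex in $w>0$ (the first integrand is jointly convex in $(w,\nabla w)$, the third integrand is strictly convex in $w$). Two positive minimizers would give two distinct minimizers of $\widetilde E$, contradicting strict convexity, so the positive minimizer is unique.

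Finally, for the decay $v_{\mathrm{rad}}(r)\to 0$: $v\in L^4\cap H^1$ plus local De Giorgi--Nash--Moser estimates give $v\in L^\infty(\R^2)$, and outside a sufficiently large ball one has $\mu\leq\ell/2<0$, so in $\epsilon^2\Delta v=(v^2-\mu)v$ the coefficient $v^2-\mu$ is bounded below by $|\ell|/2$; comparison with a radial barrier of the form $C e^{-\kappa r}$ with $\kappa^2=|\ell|/(2\epsilon^2)$ then forces $v_{\mathrm{rad}}(r)\to 0$. The main technical hurdle is the existence/coercivity step: on $\R^2$ there is no Poincar\'e inequality, the potential is sign-indefinite, and mass could a priori escape to infinity, so coercivity genuinely relies on the decomposition $\mu=\mu^+-\mu^-$ and on $\mu_{\mathrm{rad}}$ staying uniformly negative at infinity. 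A secondary difficulty is making the Schwarz rearrangement argument work with an indefinite $\mu$, handled by the separate treatment of $\mu^\pm$ described above.
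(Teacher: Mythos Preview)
Your argument is correct in outline, and for existence/regularity/nontriviality/positivity it matches the paper (which simply cites earlier work for the direct-method step and then runs the same $|v|$ plus maximum-principle reasoning). The interesting divergence is in how you obtain radiality and uniqueness. The paper does \emph{not} use Schwarz symmetrization: instead it reflects $v$ across an arbitrary line through the origin, observes that the two half-plane energies must agree (otherwise even reflection lowers the energy), and then invokes unique continuation to identify $v$ with its reflection; repeating over all lines gives radiality. For uniqueness the paper tests the Euler--Lagrange equation with $\psi=u$ to get the identity $E(u)=-\frac{1}{4\epsilon}\int u^4$, which forces any two positive radial minimizers to intersect on some circle; gluing them along that circle produces a third minimizer, and unique continuation collapses all three. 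Your route---Schwarz rearrangement for radiality, then the Benguria--Brezis--Lieb substitution $w=v^2$ and strict convexity of $\widetilde E$ for uniqueness---is more systematic and uses heavier but standard machinery; the paper's route is more elementary, relying only on reflections and unique continuation, and sidesteps the delicate equality case of P\'olya--Szeg\H{o}.

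On that last point, one small gap to flag: your sentence ``with equality forcing $u=u^*$'' is the Brothers--Ziemer equality case, which needs an extra hypothesis (measure-zero critical set on intermediate level sets) that you have not verified. The clean fix is to reorder: prove uniqueness of the positive minimizer first via your convexity argument (which nowhere uses radiality), and then simply observe that $v^*$ is also a positive minimizer, hence $v=v^*$. This avoids the equality case entirely and makes your proof watertight.
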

\begin{proof}
We proceed as in \cite[Lemma 2.1]{Clerc2018} to establish that the global minimizer exists and is a smooth solution of \eqref{ode} converging to $0$ as 
$|x|\to \infty$.
Next, we notice that $v \not\equiv 0$ for $\epsilon \ll 1$. 
Indeed, by choosing a test function $\psi\not\equiv 0$ supported in $D(0;\rho)\cap\{ x_2>0\}$, and such that $\psi^2<2\mu$, one can see that 
\[
E(\psi)=\frac{\epsilon}{2}\int_{\R^2}|\nabla \psi|^2+\frac{1}{4\epsilon}\int_{\R^2}\psi^2(\psi^2-2\mu)<0, \qquad  \epsilon\ll 1.
\] 
Let $x_0\in \R^2$ be such that $v(x_0)\neq 0$. Without loss of generality we may assume that $v(x_0)>0$. Next, consider $\tilde v=|v|$ which is another global minimizer and thus another solution.
Clearly, in a neighborhood of $x_0$ we have $v=|v|$, and as a consequence of the unique continuation principle (cf. \cite{sanada})
we deduce that $v\equiv \tilde v\geq 0$ on $\R^2 $. Furthermore, 
the maximum principle implies that $v>0$, since $v\not\equiv 0$.
To prove that $v$ is radial we consider the reflection with respect to the line $x_1=0$.
We can check that $E(v,\{x_1>0\})=E(v,\{x_1<0\})$, since otherwise by even reflection we can construct a map in $H^1$ with energy smaller  than $v$. Thus, the map 
$\tilde v(x) = v(|x_1|,x_2)$ is also a minimizer, and since $\tilde v= v$ on $\{x_1>0\}$, it follows by unique continuation that 
$\tilde v\equiv v$ on $\R^2$. Repeating the same argument for any line of reflection, we deduce that $v$ is radial.
To complete the proof, it remains to show the uniqueness of $v$ up to change of $v$ by $-v$. Let $\tilde v$ be another global minimizer such that $\tilde v>0$, and $\tilde v\not\equiv v$. 
Choosing $\psi=u$ in \eqref{euler}, we find for any solution $u\in H^1(\R^2)$ of \eqref{ode} the following alternative expression of the energy:
\begin{equation}\label{enealt}
E(u)=-\int_{\R^2}  \frac{u^4}{4\epsilon}.
\end{equation}
Formula \eqref{enealt} implies that $v $ and $\tilde v$ intersect for $|x|=r>0$. However, setting
\begin{equation*}
w(x)=\begin{cases}
      v(x) &\text{ for } |x|\leq r\\
       \tilde v(x) &\text{ for } |x|\geq r,
     \end{cases}
 \end{equation*}
we can see that $w$ is another global minimizer, and again by the unique continuation principle we have $w\equiv v\equiv\tilde v$. 
This completes the proof of Lemma \ref{lem exist min}.
\end{proof}

On the other hand, in the class $H^1_{\mathrm{odd}}(\R^2):=\{u \in H^1(\R^2): u(x_1,x_2)=-u(x_1,-x_2)\}$ of odd functions with respect to $x_2$, there exists an odd minimizer with the following properties:
\begin{lemma}\label{lem exist min odd}
For every $\epsilon> 0$ there exists $u \in H^1_{\mathrm{odd}}(\R^2)$ such that $E(u)=\min_{H^1_{\mathrm{odd}}(\R^2)} E$. As a consequence, $u$ is a $C^\infty$ classical solution of \eqref{ode}. Moreover 
\begin{itemize}
\item[(i)] $u(x)\to 0$ as $|x|\to \infty$,
\item[(ii)] $u(x_1,x_2)=u(-x_1,x_2)$,
\item[(iii)] up to transformation  $u\mapsto -u$ we have  $u(x_1,x_2)>0$, $\forall (x_1,x_2)\in \R\times (0,\infty)$, provided that $\epsilon\ll 1$.
\end{itemize}
\end{lemma}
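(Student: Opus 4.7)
I would follow the strategy of Lemma \ref{lem exist min}, adapted to the odd-symmetry constraint. Since $H^1_{\mathrm{odd}}(\R^2)$ is a closed linear subspace of $H^1(\R^2)$, and hence weakly closed, the direct method applies. Rewriting the potential part as $\int_{\R^2}\tfrac{u^2(u^2-2\mu)}{4\epsilon}$ and exploiting \eqref{hyp2}---that $\mu^+$ is compactly supported in $\overline{D(0;\rho)}$ while $\mu\leq -c<0$ outside a sufficiently large ball $B_{R_0}$---one obtains
\[
E(u)\geq\tfrac{\epsilon}{2}\|\nabla u\|_{L^2}^2+\tfrac{c}{4\epsilon}\|u\|_{L^2(\R^2\setminus B_{R_0})}^2-C,
\]
which, together with H\"older's inequality inside $B_{R_0}$ to control $\|u\|_{L^2(B_{R_0})}$ by $\|u\|_{L^4}$, bounds minimizing sequences in $H^1(\R^2)$. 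Weak lower semicontinuity is then routine: the gradient and quartic terms are weakly l.s.c., $\int\mu^+u_n^2\to\int\mu^+u^2$ by compact support, and $\int\mu^-u_n^2$ passes under $\liminf$ by Fatou. Hence a minimizer $u\in H^1_{\mathrm{odd}}(\R^2)$ exists and solves \eqref{ode} weakly; $C^\infty$ regularity follows from the standard elliptic bootstrap.

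For (i), a local Moser bound $\|u\|_{L^\infty(B_1(x))}\leq C\|u\|_{L^4(B_2(x))}$ combined with $u\in L^4(\R^2)$ yields $u(x)\to 0$ as $|x|\to\infty$. For (ii), the radial symmetry of $\mu$ forces the energies of $u$ on the two half-planes $\{x_1\gtrless 0\}$ to be equal---otherwise even-reflecting the side of smaller energy across $\{x_1=0\}$ would produce a competitor in $H^1_{\mathrm{odd}}$ with strictly smaller energy. Thus $\tilde u(x_1,x_2):=u(|x_1|,x_2)$ is another minimizer of $E$ on $H^1_{\mathrm{odd}}$, coinciding with $u$ on $\{x_1>0\}$, and unique continuation \cite{sanada} gives $\tilde u\equiv u$, which is the desired evenness in $x_1$.

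For (iii), I would first show $\inf_{H^1_{\mathrm{odd}}}E<0$ for $\epsilon\ll 1$ by testing with the odd extension $\psi(x_1,x_2):=\sgn(x_2)\,\psi_0(x_1,|x_2|)$ of some $\psi_0\in C^\infty_0(D(0;\rho)\cap\{x_2>0\})$ with $0<\psi_0^2<2\mu$ on its support, exactly as in Lemma \ref{lem exist min}; this forces $u\not\equiv 0$. Next I set $\tilde u(x_1,x_2):=\sgn(x_2)|u(x_1,x_2)|$. Since $u$ vanishes on $\{x_2=0\}$ by oddness and continuity, the sign flip introduces no jump, so $\tilde u\in H^1_{\mathrm{odd}}(\R^2)$ with $|\nabla\tilde u|=|\nabla u|$ a.e.\ and $\tilde u^2=u^2$; hence $E(\tilde u)=E(u)$ and $\tilde u$ is another minimizer, a classical solution of \eqref{ode}. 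After possibly replacing $u$ by $-u$, the function $u$ is strictly positive on some open ball inside $\{x_2>0\}$, on which $\tilde u=u$; the difference $w:=\tilde u-u$ satisfies the linear elliptic equation $\epsilon^2\Delta w+(\mu-\tilde u^2-\tilde u u-u^2)w=0$ with bounded coefficients and vanishes on this open set, so unique continuation yields $\tilde u\equiv u$ on $\R^2$, i.e.\ $u\geq 0$ in $\{x_2>0\}$. Harnack's inequality (equivalently, the strong maximum principle for linear elliptic equations with $L^\infty$ coefficients) then upgrades this to $u>0$ strictly. The main obstacle is this last step: the admissibility of $\tilde u=\sgn(x_2)|u|$ crucially uses the oddness-induced vanishing on $\{x_2=0\}$, and transferring the sign from $\tilde u$ back to $u$ requires recasting the semilinear equation as a linear equation in $w$ with bounded coefficients, at which point unique continuation applies.
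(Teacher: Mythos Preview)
Your proposal is correct and follows essentially the same route as the paper: direct method for existence, the reflection-plus-unique-continuation argument for (ii), and for (iii) the competitor $\tilde u=\sgn(x_2)|u|$ combined with unique continuation and the strong maximum principle. The one step the paper makes explicit that you gloss over is why a minimizer in $H^1_{\mathrm{odd}}$ satisfies the full weak equation \eqref{euler} and not merely its restriction to odd test functions: this follows because for even $\phi$ the integrand in \eqref{euler} is odd in $x_2$ and hence integrates to zero, so the Euler--Lagrange identity automatically extends to all $\phi\in H^1(\R^2)$.
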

\begin{proof}
The existence of $u \in H^1_{\mathrm{odd}}(\R^2)$ such that $E(u)=\min_{H^1_{\mathrm{odd}}(\R^2)} E$, follows as in \cite[Lemma 2.1]{Clerc2018}, and clearly $u$ is a critical point of $E$ in the subspace $H^1_{\mathrm{odd}}(\R^2)$.
In view of the radial symmetry of $\mu$ it is easy to see that the Euler-Lagrange equation \eqref{euler} 
holds also for every $\phi \in  H^1(\R^2)$, such that $\phi(x_1,x_2)=\phi(x_1,-x_2)$. As a consequence, 
$u$ is a $C^\infty$ classical solution of \eqref{ode}.

For the proof of (i) we refer to \cite[Lemma 2.1]{Clerc2018}.
To show that $u(x_1,x_2)=u(-x_1,x_2)$, we first note that $E(u,[0,\infty)\times\R )=E(u,(-\infty,0]\times\R)$. Indeed, if we assume without loss of generality that $E(u,[0,\infty)\times\R)<E(u, (-\infty,0]\times\R)$, the function
\begin{equation}
\tilde u(x_1,x_2)=
\begin{cases}
u(x_1,x_2) &\text{when  } x_1\geq 0,\\
u(-x_1,x_2) &\text{when  } x_1\leq 0,
\end{cases}
\end{equation}
has strictly less energy than $u$, which is a contradiction. Thus, $E(u,[0,\infty)\times\R )=E(u,(-\infty,0]\times\R)$, and as a consequence the 
function $\tilde u$ is also an odd minimizer and a solution. It follows by unique continuation \cite{sanada} that $\tilde u\equiv u$, that is, $u(x_1,x_2)=u(-x_1,x_2)$.

Now, it remains
to establish the uniqueness of the odd minimizer $u$, when $\epsilon\ll 1$.
Proceeding as in Lemma \ref{lem exist min}, we can see that $u \not\equiv 0$ for $\epsilon \ll1 $, and that either $u>0$ or $u<0$ on $\R\times (0,\infty)$.
Assume that $u_1$ and $u_2 $ are two minimizers of $E$ in $H^1_{\mathrm{odd}}(\R^2)$ such that 
$u_1>0$ and $u_2>0$ on $\R\times (0,\infty)$. Next, define the maps
\begin{equation}
 u_*(x_1,x_2)=
\begin{cases}
\min(u_1(x_1,x_2),u_2(x_1,x_2)) &\text{when  } x_2\geq 0,\\
\max(u_1(x_1,x_2),u_2(x_1,x_2)) &\text{when  } x_2\leq 0,
\end{cases}
\end{equation}
\begin{equation}
 u^*(x_1,x_2)=
\begin{cases}
\max(u_1(x_1,x_2),u_2(x_1,x_2)) &\text{when  } x_2\geq 0,\\
\min(u_1(x_1,x_2),u_2(x_1,x_2)) &\text{when  } x_2\leq 0,
\end{cases}
\end{equation}
and the set $A:=\{(x_1,x_2)\in\R\times(0,\infty): u_1(x_1,x_2)<u_2(x_1,x_2)\}$. We can see that $E(u_1,A)=E(u_2,A)$ since otherwise we have either $E(u_*)<E(u_2)$ or
$E(u^*)<E(u_1)$, which contradicts the minimality of $u_1$ and $u_2$. As a consequence, $E(u_*)=E(u_2)=E(u_1)=E(u^*)$, and it follows that $u_*$ and $u^*$ are also minimizers and solutions. Next, by unique continuation \cite{sanada}, we obtain that either $u_1\equiv u_*$ or $u_1\equiv u^*$, i.e. we have either 
$0\leq u_1\leq u_2$ or $u_1\geq u_2\geq 0$ on $ \R\times[0,\infty)$.
Finally, applying \eqref{enealt} to $E(u_1)=E(u_2)$, we conclude in view of the ordering of $u_1$ and $u_2$ that $u_1\equiv u_2$.
This completes the proof.
\end{proof}
To study the limit of solutions as $\epsilon\to 0$, we need uniform bounds.
Modifying slightly  the arguments in \cite[Section 2]{Clerc2018}, we obtain:
\begin{lemma}\label{s3}
For every  $\epsilon>0$  let $u_{\epsilon}$ be a solution of \eqref{ode} converging to $0$ as $|x|\to \infty$. Then, $u_{\epsilon}$ are uniformly bounded. 
\end{lemma}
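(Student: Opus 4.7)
The plan is to deduce the uniform bound from a direct maximum principle argument applied pointwise to \eqref{ode}. Since $u_\epsilon$ is continuous and tends to $0$ at infinity, both the supremum $M_\epsilon:=\sup_{\R^2}u_\epsilon$ and the infimum $m_\epsilon:=\inf_{\R^2}u_\epsilon$ are attained (either in $\R^2$ or are equal to $0$).

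If $M_\epsilon>0$, pick $x_0\in\R^2$ with $u_\epsilon(x_0)=M_\epsilon$. At such an interior maximum, $\nabla u_\epsilon(x_0)=0$ and $\Delta u_\epsilon(x_0)\leq 0$, so \eqref{ode} gives
\[
0\geq \epsilon^2 \Delta u_\epsilon(x_0)=-\mu(x_0)u_\epsilon(x_0)+u_\epsilon(x_0)^3=u_\epsilon(x_0)\bigl(u_\epsilon(x_0)^2-\mu(x_0)\bigr),
\]
which forces $M_\epsilon^2\leq \mu(x_0)\leq \|\mu\|_{L^\infty(\R^2)}$. An analogous argument at a negative interior minimum gives $m_\epsilon^2\leq \|\mu\|_{L^\infty(\R^2)}$. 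Combining the two cases (and noting that if $M_\epsilon\leq 0$ or $m_\epsilon\geq 0$ the corresponding bound is trivial), we obtain
\[
\|u_\epsilon\|_{L^\infty(\R^2)}\leq \sqrt{\|\mu\|_{L^\infty(\R^2)}},
\]
which is independent of $\epsilon$, as required. The hypothesis $\mu\in L^\infty(\R^2)$ from \eqref{hyp2} is what makes the right-hand side finite.

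There is no genuine obstacle here: the only subtlety is ensuring that the extrema of $u_\epsilon$ are attained in $\R^2$, but this is immediate from continuity together with decay at infinity. This is exactly the kind of pointwise argument alluded to by the reference to \cite[Section 2]{Clerc2018}; no rescaling or Moser-type iteration is needed because the nonlinearity $u^3$ already dominates the linear term $\mu u$ for large $|u|$, and the scale $\epsilon$ only enters through the Laplacian, which has a favorable sign at an extremum.
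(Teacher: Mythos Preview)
Your argument is correct and is essentially identical to the paper's own proof: both evaluate \eqref{ode} at an interior extremum (which exists by decay at infinity), use the sign of the Laplacian there, and read off the bound $|u_\epsilon|\leq\sqrt{\|\mu\|_{L^\infty}}$ from the cubic $u^3-\mu u$. The paper phrases this as ``the roots of $u^3-\mu(x)u=0$ belong to a bounded interval,'' but the content is the same.
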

\begin{proof}
We drop the index and write $u:=u_{\epsilon}$.
Since $\mu$ is bounded, the roots of the cubic equation $u^3-\mu(x)u=0$ belong to a bounded interval, for all values of $x$. 
If $u$ takes positive values, then it attains its maximum $0\leq \max_{\R^2}u=u(x_0)$, at a point $x_0\in\R^2$. 
In view of \eqref{ode}: $$0\geq\epsilon^2 \Delta u(x_0)=u^3(x_0)-\mu(x_0)u(x_0),$$ thus it follows that 
$u(x_0)$ is uniformly bounded above. In the same way, we prove the uniform lower bound for $u$. 
\end{proof}

\begin{lemma}\label{l2}
For $\epsilon\ll 1$  let $u_{\epsilon}$ be a solution of \eqref{ode} converging to $0$ as $|x|\to\infty$. 
Then, there exist a constant  $K>0$ such that 
\begin{equation}\label{boundd}
|u_{\ve}(x)|\leq K(\sqrt{\max(\mu (x),0)}+\ve^{1/3}), \quad \forall x\in \R^2.
\end{equation}
As a consequence, if for every $\xi=\rho e^{i\theta}$ we consider the local coordinates $s=(s_1,s_2)$ in the basis $(e^{i\theta},i e^{i\theta})$, then the rescaled functions $w_\epsilon(s)$ defined  in (\ref{u scaled})  are uniformly bounded on the half-planes $[s_0,\infty)\times\R$, $\forall s_0\in\R$. 
\end{lemma}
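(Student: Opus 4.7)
The plan is to prove the pointwise bound \eqref{boundd} by a maximum principle argument applied to the auxiliary function
\[
\Phi(x) := u_\epsilon^2(x) - K^2\mu^+(x) - K^2\epsilon^{2/3},
\]
chosen so that \eqref{boundd} is equivalent to $\Phi\leq 0$ everywhere. Since $u_\epsilon\to 0$ at infinity and $\mu^+$ is supported in the compact disk $\overline{D(0,\rho)}$, one has $\Phi(x)\to -K^2\epsilon^{2/3}<0$ as $|x|\to\infty$; so if $\sup\Phi>0$, the supremum is attained at some $x_0\in\R^2$. The aim is to derive a contradiction, for a fixed $K\geq 1$ and $\epsilon$ sufficiently small.

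The key computation uses \eqref{ode} in the form $\epsilon^2\Delta u_\epsilon=u_\epsilon^3-\mu u_\epsilon$, which gives
\[
\Delta(u_\epsilon^2) \;=\; 2|\nabla u_\epsilon|^2 + \tfrac{2}{\epsilon^2}\, u_\epsilon^2(u_\epsilon^2-\mu).
\]
I would split according to the sign of $\mu(x_0)$. If $\mu(x_0)>0$, then $\Phi$ is $C^2$ near $x_0$ and $\Delta\Phi(x_0)\leq 0$, hence $\tfrac{2}{\epsilon^2}u_\epsilon^2(u_\epsilon^2-\mu)\leq K^2\|\Delta\mu\|_{L^\infty(\bar D(0,\rho))}$; combining this with $\Phi(x_0)>0$ and $K\geq 1$, which force $u_\epsilon^2\geq K^2\epsilon^{2/3}$ and $u_\epsilon^2-\mu\geq K^2\epsilon^{2/3}$, yields $K^2\leq \tfrac12\|\Delta\mu\|_{L^\infty}\epsilon^{2/3}$, impossible for $\epsilon$ small. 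If $\mu(x_0)<0$, then $\Phi=u_\epsilon^2-K^2\epsilon^{2/3}$ near $x_0$ and $\Delta\Phi=\Delta(u_\epsilon^2)>0$ (because $u_\epsilon(x_0)\neq 0$ from $\Phi(x_0)>0$, and $u_\epsilon^2-\mu>0$ in this region), again contradicting $\Delta\Phi(x_0)\leq 0$.

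The delicate case $\mu(x_0)=0$ is the main technical point, since $\mu^+$ is only Lipschitz across the interface $\{\mu=0\}$ and $\Phi$ fails to be $C^2$ at $x_0$. I would rule out a positive boundary maximum via a Hopf-type argument: on one hand, the strict subharmonicity $\Delta(u_\epsilon^2)>0$ wherever $\mu\leq 0$ and $u_\epsilon\neq 0$ applied on the outer side of the interface gives a strict inner normal derivative inequality at $x_0$; on the other hand, applying the Case~A analysis to $\Phi$ restricted to $\overline{D(0,\rho)}$ forces the interior estimate to hold up to the boundary; the two together are incompatible for $\epsilon$ small, yielding the desired contradiction. Alternatively, one can replace $\mu^+$ by a smooth approximation from above with controlled Laplacian and pass to the limit.

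Once \eqref{boundd} is established, the uniform bound on $w_\epsilon$ on $[s_0,\infty)\times\R$ follows from \eqref{u scaled} by direct substitution. Writing $x_\xi(s) := \xi+\alpha s$ with $\alpha := \epsilon^{2/3}(-\mu_1)^{-1/3}$, one computes $|x_\xi(s)|^2-\rho^2 = 2\rho\alpha s_1+\alpha^2(s_1^2+s_2^2)$, which yields, via the radial structure of $\mu$ and the identity $\mu'_{\mathrm{rad}}(\rho)=\mu_1$, the uniform bound $\mu^+(x_\xi(s))\leq C(-\mu_1)^{2/3}\max(-s_0,0)\,\epsilon^{2/3}$ for all $s_2\in\R$ and every $s_1\geq s_0$. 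Substituting into \eqref{u scaled} gives
\[
|w_\epsilon(s)| \;\leq\; \frac{K}{2^{1/2}(-\mu_1\epsilon)^{1/3}}\bigl(\sqrt{\mu^+(x_\xi(s))}+\epsilon^{1/3}\bigr) \;\leq\; C'(s_0),
\]
uniformly in $\epsilon$ and in $s\in[s_0,\infty)\times\R$, as required.
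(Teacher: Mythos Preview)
Your overall strategy—comparing $u_\epsilon^2$ against $K^2\mu^+ + K^2\epsilon^{2/3}$ via the maximum principle—is the same as the paper's, and your treatment of the cases $\mu(x_0)>0$ and $\mu(x_0)<0$ is correct. The derivation of the uniform bound on $w_\epsilon$ from \eqref{boundd} is also fine.

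The gap is precisely where you flag it, but your proposed Hopf argument does not close. The distributional Laplacian of $\mu^+$ carries a \emph{positive} singular measure $(-\mu_1)\,\mathcal{H}^1\big|_{\{|x|=\rho\}}$ (the outward normal derivative of $\mu^+$ jumps from $\mu_1<0$ inside to $0$ outside), so $\Delta\Phi$ picks up the \emph{negative} contribution $K^2\mu_1\,\mathcal{H}^1\big|_{\{|x|=\rho\}}$. In other words, $\Phi$ has a concave kink across the interface, and a maximum sitting exactly there is perfectly compatible with both one-sided Hopf inequalities: from the exterior you get $\partial_r(u^2)(x_0)<0$, from the interior you get $\partial_r(u^2)(x_0)>K^2\mu_1$, and any value of $\partial_r(u^2)(x_0)$ in the interval $(K^2\mu_1,0)$ satisfies both. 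So there is no incompatibility.

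Your ``alternative'' of replacing $\mu^+$ by a smooth upper approximation with controlled Laplacian is exactly what the paper does, and it is where the actual content lies. The paper builds an explicit radial $C^1$ comparison function $\chi$: linear in $\rho-|x|$ on $D(0,\rho)$, parabolic on the annulus $\rho\leq|x|\leq\rho+\epsilon^{2/3}$, and zero beyond, designed so that $\Delta\chi\leq 2\lambda\,\epsilon^{-2/3}$ in the $H^1$ sense. The auxiliary function $\psi=\tfrac12|u|^2-\chi-\kappa^2\epsilon^{2/3}$ is then shown to satisfy $\Delta\psi\geq 0$ on $\{\psi>0\}$, and Kato's inequality plus the maximum principle give $\psi\leq 0$. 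The $\epsilon^{2/3}$-wide parabolic transition layer is precisely what absorbs the would-be singular measure and makes the $\epsilon^{1/3}$ scale appear; writing it down is the step you have deferred.
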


\begin{proof}
As above we write $u:=u_{\epsilon}$. Let us define the following constants
\begin{itemize}
\item $ M>0$ is the uniform bound of $|u_{\epsilon}|$ (cf. Lemma \ref{s3}),
\item $\lambda>0$ is such that $3 \mu_{\mathrm{rad}}(\rho-h)\leq 2\lambda h$, $\forall h \in [0,\rho]$,
\item $\kappa>0$ is such that  $\kappa^4\geq 6\lambda$.
\end{itemize}
Next, we construct the following comparison function
\begin{equation}\label{compchi}
\chi(x)=
\begin{cases}
\lambda\Big(\rho-|x|+\frac{\epsilon^{2/3}}{2}\Big)&\text{ for } |x|\leq\rho,\\
\frac{\lambda}{2\epsilon^{2/3}}(|x|-\rho-\epsilon^{2/3})^2&\text{ for }\rho\leq |x|\leq\rho+\epsilon^{2/3},\\
0&\text{ for } |x|\geq\rho+\epsilon^{2/3}.
\end{cases}
\end{equation}
One can check that $\chi\in C^1(\R^2\setminus\{0\})\cap H^1(\R^2)$ satisfies $\Delta \chi\leq \frac{2\lambda}{\epsilon^{2/3}}$ in $H^1(\R^2)$. Finally, we define the function
$\psi:=\frac{|u|^2}{2}-\chi-\kappa^2\epsilon^{2/3}$, and compute:
\begin{align}
\epsilon^2 \Delta \psi&=\epsilon^2 (|\nabla u|^2+ u\Delta u-\Delta \chi)\nonumber\\
&\geq-\mu |u|^2+|u|^4-\epsilon^2 \Delta \chi \nonumber\\
&\geq-\mu |u|^2+|u|^4-2\epsilon^{4/3}\lambda.
\end{align}
Now, one can see that when $x\in \omega:=\{x\in\R^2: \psi(x)> 0\}$, we have $\frac{|u|^4}{3}- \mu|u|^2\geq 0$, since
$$x \in\omega\cap \overline{D(0;\rho)}\Rightarrow \frac{|u|^4}{3}\geq \frac{2\lambda}{3}\Big(\rho-|x|+\frac{\epsilon^{2/3}}{2}\Big)|u|^2\geq \mu|u|^2 .$$
In the open set $\omega$ we also have: $\frac{|u|^4}{3}\geq \frac{\kappa^4}{3}\epsilon^{4/3}\geq 2\epsilon^{4/3}\lambda$, 
thus $\Delta \psi \geq 0$ in $\omega$ in the $H^1$ sense. 
To conclude, we apply Kato's inequality that gives: $\Delta \psi^+ \geq 0$ on $\R^2$ in the $H^1$ sense. Since $\psi^+$ is subharmonic with compact
support, we obtain by the maximum principle that $\psi^+\equiv 0$ or equivalently $\psi \leq 0$ in $\R^2$. The statement of the lemma follows by adjusting the constant $K$.
\end{proof}

After this preparation we are ready to prove the main result of this section.

\begin{proof}[Proof Theorem \ref{theorem 1}] 
For every $\xi=\rho e^{i\theta}$ we consider the local coordinates $s=(s_1,s_2)$ in the basis $(e^{i\theta},i e^{i\theta})$,
and we rescale the solutions by setting $\tilde u(s)=\frac{u_{\epsilon}(\xi+ s\epsilon^{2/3})}{\epsilon^{1/3}}$.
Clearly $\Delta \tilde u(s)=\epsilon \Delta u(\xi+s\epsilon^{2/3})$, thus,
\begin{equation}\label{oderes1}
\Delta \tilde u(s)+\frac{\mu(\xi+s\epsilon^{2/3})}{\epsilon^{2/3}} \tilde u(s)-\tilde u^3(s)=0, \qquad \forall s\in \R^2.\nonumber
\end{equation}
Writing $\mu(\xi+h)=\mu_1 h_1+h\cdot A(h)$, with $\mu_1:=\mu'_{\mathrm{rad}}(\rho)<0$, $A \in C^\infty(\R^2,\R^2)$, and $A(0)=0$, we obtain
\begin{equation}\label{oderes2}
\Delta \tilde u(s)+(\mu_1 s_1 + A(s \epsilon^{2/3})\cdot s) \tilde u(s)-\tilde u^3(s)=0,\qquad  \forall s\in \R^2.
\end{equation}
Next, we define the rescaled energy by
\begin{equation}
\label{functres2}
\tilde E(\tilde u)=\int_{\R^2}\left(\frac{1}{2}|\nabla\tilde u(s)|^2-\frac{\mu(\xi+s \epsilon^{2/3})}{2\epsilon^{2/3}}\tilde u^2(s)+\frac{1}{4}\tilde u^4(s)\right)\dd s.
\end{equation}
With this definition $\tilde E(\tilde u)=\frac{1}{\epsilon^{5/3}}E(u)$.
From Lemma \ref{l2} and \eqref{oderes2}, it follows that $\Delta \tilde u$, and also $\nabla\tilde u$, are uniformly bounded on compact sets. 
Moreover, by differentiating \eqref{oderes2} we also obtain the boundedness of the second derivatives of $\tilde u$.
Thanks to these uniform bounds, we can apply the theorem of Ascoli via a diagonal argument to obtain the convergence of $\tilde u$ in 
$C^2_{\mathrm{loc}}(\R^2)$ (up to a subsequence) 
to a solution $\tilde z $ of 
\begin{equation}\label{oderes4}
\Delta \tilde z(s)+\mu_1 s_1 \tilde z(s)-\tilde z^3(s)=0, \ \forall s\in \R^2, 
\end{equation}
which is associated to the functional
\begin{equation}
\label{functres4}
\tilde E_0(\phi,J)=\int_{J}\left(\frac{1}{2}|\nabla\phi(s)|^2-\frac{\mu_1}{2} s_1 \phi^2(s)+\frac{1}{4}\phi^4(s)\right)\dd s.
\end{equation}
Given $\tilde  \psi(s)$ a test function supported in the compact set $K$, let $\psi(x):=\epsilon^{1/3}\tilde \psi (\frac{x-\xi}{\epsilon^{2/3}})\Leftrightarrow \tilde \psi(s)=\frac{\psi(\xi+ s\epsilon^{2/3})}{\epsilon^{1/3}}$.
In the case where we take $u$ to be the global minimizer $v$, since
$ E( v_\epsilon+ \psi,\supp\psi)\geq  E( v_\epsilon,\supp \psi)$, 
we have 
$\tilde E(\tilde v_\epsilon+\tilde \psi,K)\geq \tilde E(\tilde v_\epsilon,K)$, and at the limit
$\tilde E_{0}( \tilde z+\tilde\psi,K)\geq  \tilde E_{0}(\tilde z,K)$.
Thus, $\tilde z$ is a minimal solution of \eqref{oderes4}. In addition, the radial symmetry of $v$, implies that
$\tilde z$ depends only on the variable $s_1$. Indeed, noticing that $\lim_{\epsilon\to 0}\frac{|\xi +\epsilon^{\frac{2}{3}}(s_1,s_2)|-\rho}{\epsilon^{\frac{2}{3}}}=s_1$,
it follows that $\tilde v_\epsilon (s_1,s_2)=\tilde v_\epsilon (s_1+o(1),0)$, and $\tilde z(s_1,s_2)=\tilde z(s_1,0)$.
Similarly, in the case where we take $u$ to be the odd minimizer and $\xi=(\pm\rho,0)$, we can see that $\tilde z$ is a minimal solution of \eqref{oderes4} for perturbations such that $\tilde \psi (s_1,s_2)=-\tilde \psi(s_1,-s_2)$.
Finally, setting $y(s):=\frac{1}{\sqrt{2}(-\mu_1)^{1/3}}\tilde z\big(\frac{s}{(-\mu_1)^{1/3}}\big)$, \eqref{oderes4} reduces to \eqref{pain}, 
that is, $y$ solves \eqref{pain}. In the case where we take $u$ to be the global minimizer $v$, we can see that either $y(s_1,s_2)=h(s_1)$ or $y(s_1,s_2)=-h(s_1)$, since $\pm h$ are the only minimal solutions of \eqref{pain 1d} (cf. \cite[Theorem 1.3]{Clerc2017}). On the other hand, in the case where we take $u$ to be the odd minimizer and $\xi=(\pm\rho,0)$, it is clear that  
$y$ is odd with respect to $s_2$, and minimal for perturbations such that $\tilde \psi (s_1,s_2)=-\tilde \psi(s_1,-s_2)$.
\end{proof}

\section{Proof of Theorem \ref{corpain}}\label{proofpain}
We will proceed in few steps.  The proof of (i), (ii) and (iii) follows from Theorem \ref{theorem 1}, Lemma \ref{l2}, and the fact that a minimal solution of \ref{painhom} cannot be identically zero. To establish (v) we proceed as in Theorem \ref{theorem 1}. After rescaling appropriately $y$ as $x_1\to-\infty$, we compute uniform bounds of the rescaled functions. Then, by the theorem of Ascoli, we obtain at the limit a minimal solution of the Allen-Cahn equation \eqref{acb2}. The proof of (vi) and (vii) is based on the moving plane method applied in a sector contained in the upper half-plane. The main difficulty is due to the unboundedness of the domain and to the availability of boundary conditions only on the $x_1$ axis where $y(x_1,0)=0$. We also utilize the asymptotic behaviour of $y$, as $x_1\to \pm\infty$, provided respectively by (v) and Lemma \ref{expcvv}. Our main tool is a version of the maximum principle in unbounded domains (cf. Lemma \ref{maxpp}), that allows us to compute bounds for $y_{x_1}$ and $y_{x_2}$ when $x_1$ is large enough and $x_2>0$ (cf. Lemmas \ref{lll1} and \ref{lll2}). Next, these bounds are extended to the whole half-plane $x_2>0$ by applying the sliding method (cf. Lemma \ref{movingplane}).

\begin{proof}[Proof of (i), (ii) and (iii)]
By applying Theorem \ref{theorem 1} in a neighborhood of the point $\xi=(\rho, 0)$ to the odd minimizer $u$, such that $u>0$ on $\R\times (0,\infty)$,  it is clear that we obtain a solution $y$ of \eqref{pain} which is odd with respect to the second variable $s_2$, and such that $y\geq 0$, on $\R\times (0,\infty)$. For the sake of convenience in what follows we substitute the variables $(s_1,s_2)$ by $(x_1,x_2)$. The properties (ii) and (iii) are also straightforward by Theorem \ref{theorem 1} and Lemma \ref{l2}. Thus, it remains to show that
$y(x_1,x_2)> 0$, $\forall x \in\R\times (0,\infty)$. Assume by contradiction that $y(x_1,x_2)= 0$, for some $x \in\R\times (0,\infty)$, then it follows from the maximum principle that $y\equiv 0$. To conclude we are going to show that a solution $y$ of \eqref{painhom}
which is minimal for odd perturbations, cannot be identically zero. Indeed, the minimality of $y$ implies that the second variation of the energy $E_{\mathrm{P_{II}}}$ 
is nonnegative:
\begin{equation}\label{secondvar}
\int_{\R^2} (|\nabla \phi(x)|^2+(6 y^2(x)+x_1)\phi^2(x))\dd x \geq 0,\quad  \forall \phi\in C^1_0(\R^2), \text{ such that } \phi(x_1,x_2)=-\phi(x_1,-x_2).
\end{equation}
Clearly \eqref{secondvar} does not hold when $y\equiv 0$, if we take $\phi(x)=\phi_0(x_1+l,x_2)$, with $l\to\infty$, and $\phi_0\in C^1_0(\R^2)$ fixed, such that $\phi_0(x_1,x_2)=-\phi_0(x_1,-x_2)$, and $\phi_0 \not\equiv 0$. 
\end{proof}

Next we recall a useful version of the maximum principle in unbounded domains \cite[Lemma 2.1]{beres}.
\begin{lemma}\label{maxpp}
Let $D$ be a domain (open connected set) in $\R^n$, possibly unbounded. Assume that $\overline D$ is disjoint from the closure of an infinite open connected cone $\Sigma$. Suppose there is a function $z$ in $C(\overline D)$ that is bounded above and satisfies for some continuous function $c(x)$
$$\Delta z-c(x)z\geq 0 \text{ in $D$ with } c(x)\geq 0$$
$$z\leq 0 \text{ on }\partial D.$$
Then $z\leq 0 $ in $D$. 
\end{lemma}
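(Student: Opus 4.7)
The strategy is a barrier argument exploiting the cone geometry: construct a positive supersolution $\varphi$ of $L\varphi := \Delta\varphi - c\varphi \le 0$ in $D$ with $\varphi(x) \to \infty$ as $|x| \to \infty$, and then use classical comparison on truncations.

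\emph{Step 1 (construction of the barrier).} After translating the apex of $\Sigma$ to the origin, let $\Omega := \SF^{n-1} \setminus \overline{\Sigma \cap \SF^{n-1}}$, a proper open subset of the sphere, and let $(\lambda_1, \psi_1)$ be its first Dirichlet eigenpair for $-\Delta_{\SF^{n-1}}$, with $\lambda_1 > 0$ and $\psi_1 > 0$. Pick $\alpha > 0$ to be the positive root of $\alpha(\alpha + n - 2) = \lambda_1$ and set $\varphi(r\omega) := r^\alpha \psi_1(\omega)$. The polar decomposition $\Delta = r^{1-n}\partial_r(r^{n-1}\partial_r) + r^{-2} \Delta_{\SF^{n-1}}$ yields $\Delta\varphi = 0$ on $\R^n \setminus \overline\Sigma$. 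Hence $L\varphi = -c\varphi \le 0$ on $D$, and $\varphi > 0$ grows like $r^\alpha$ at infinity.

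\emph{Step 2 (comparison on truncations).} For $\epsilon > 0$ put $z_\epsilon := z - \epsilon\varphi$. Then $L z_\epsilon = L z - \epsilon L\varphi \ge 0 + \epsilon c \varphi \ge 0$ in $D$, while $z_\epsilon \le z \le 0$ on $\partial D$. For $R > 0$ apply the classical weak maximum principle for $L$ (which is valid because $c \ge 0$, so the zero-order coefficient of $L$ is nonpositive) on the bounded domain $D_R := D \cap B_R$. Its boundary splits as $(\partial D \cap \overline{B_R}) \cup (\partial B_R \cap \overline D)$, and on the second piece
\[
z_\epsilon \le M - \epsilon\,\min_{\partial B_R \cap \overline D}\varphi, \qquad M := \sup_{D} z^{+} < \infty.
\]
Once $R$ is so large that the right-hand side is $\le 0$, the weak maximum principle gives $z_\epsilon \le 0$ in $D_R$, i.e.\ $z \le \epsilon\varphi$ there. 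Sending first $R \to \infty$ at fixed $x$ and then $\epsilon \to 0^+$ yields $z \le 0$ on $D$.

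\emph{Main obstacle.} The delicate point is the uniform lower bound $\min_{\partial B_R \cap \overline D} \varphi \ge c_0 R^\alpha$ as $R \to \infty$, because $\psi_1$ vanishes on $\partial\Omega$ and the angular projection of $\overline D$ might accumulate on $\partial\Omega$ at infinity even though $\overline D$ is separated from $\overline\Sigma$. The standard remedy is to replace $\psi_1$ by a barrier built directly from a fundamental solution of $\Delta$ (a shifted Newton kernel in dimension $n\ge 3$, a logarithm for $n=2$) centered at an interior point of $\Sigma$; the distance estimate $\mathrm{dist}(x,\overline\Sigma) \gtrsim |x|$ along rays through this interior point, combined with a Harnack / Phragmén-Lindelöf iteration across $\overline D$, then supplies the required growth at infinity. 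With this refined supersolution in hand, the comparison chain in Step~2 closes the argument.
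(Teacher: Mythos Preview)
The paper does not prove this lemma; it is quoted verbatim from Berestycki--Caffarelli--Nirenberg \cite[Lemma~2.1]{beres}. Your barrier-plus-truncation strategy is exactly the standard one used there, and Steps~1--2 are set up correctly. However, the proof as written has a real gap, which you yourself flag under ``Main obstacle'' but do not close.

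The issue is that Step~2 needs $\min_{\partial B_R\cap\overline D}\varphi\to\infty$, and with $\varphi=r^\alpha\psi_1(\omega)$ built from $\Omega=\SF^{n-1}\setminus\overline{\Sigma\cap\SF^{n-1}}$ this can genuinely fail, since the angular projection of $\overline D$ may accumulate on $\partial\Omega$ where $\psi_1$ vanishes. Your proposed remedy is muddled: a Newton kernel $|x-p|^{2-n}$ \emph{decays} at infinity and cannot serve as a growing barrier, and the sentence about ``$\mathrm{dist}(x,\overline\Sigma)\gtrsim|x|$ along rays through an interior point of $\Sigma$'' describes points inside $\Sigma$, not in $D$. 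The correct and much simpler fix is to shrink the cone: choose an open subcone $\Sigma'$ with the same apex and $\overline{\Sigma'}\setminus\{0\}\subset\Sigma$, and run Step~1 with $\Omega':=\SF^{n-1}\setminus\overline{\Sigma'\cap\SF^{n-1}}$ in place of $\Omega$. Since (after translating the apex to the origin) every $x\in\overline D$ satisfies $x/|x|\in\Omega=\SF^{n-1}\setminus\overline{\Sigma\cap\SF^{n-1}}$, the angular projection of $\overline D$ lies in $\overline\Omega$, which is now a \emph{compact} subset of $\Omega'$. Hence the first Dirichlet eigenfunction $\psi'_1$ of $\Omega'$ is bounded below by some $c_0>0$ on that projection, giving $\varphi'(x)=|x|^{\alpha'}\psi'_1(x/|x|)\ge c_0|x|^{\alpha'}$ uniformly on $\overline D$, and Step~2 goes through unchanged.
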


As a first application of Lemma \ref{maxpp} we prove the exponential convergence of $y$ to $0$, as $x_1\to \infty$.
\begin{lemma}\label{expcvv}
$|y(x_1,x_2)|=O(e^{-\frac{2}{3}x_1^{3/2}})$, as $x_1\to\infty$ (uniformly in $x_2$).
\end{lemma}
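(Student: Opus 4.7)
The plan is to squeeze $y$ between $\pm\phi$, where $\phi$ is a one-dimensional barrier whose exponential decay rate matches that of the Airy function, and then invoke the maximum principle for unbounded domains (Lemma \ref{maxpp}). First I would rewrite the Painlev\'e equation in the form
\begin{equation*}
\Delta y = (x_1 + 2 y^2)\, y,
\end{equation*}
and observe that the same equation is satisfied by $-y$, so it suffices to establish the one-sided bound $y \leq \phi$; applying the argument to $-y$ then yields the two-sided $|y|$ estimate.

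The barrier I would take is $\phi(x_1) := M\, e^{-\frac{2}{3} x_1^{3/2}}$, with $M > 0$ to be chosen. A direct computation gives
\begin{equation*}
\phi''(x_1) = \bigl(x_1 - \tfrac{1}{2} x_1^{-1/2}\bigr)\, \phi(x_1), \qquad x_1 > 0,
\end{equation*}
so $\phi$ is a strict supersolution of the linearized equation $\Delta w = x_1 w$. Setting $z := y - \phi$ and using the reformulated equation, I would then verify
\begin{equation*}
\Delta z - (x_1 + 2 y^2)\, z = \bigl(2 y^2 + \tfrac{1}{2} x_1^{-1/2}\bigr)\, \phi \geq 0, \qquad x_1 > 0,
\end{equation*}
with the zeroth-order coefficient $c(x) := x_1 + 2 y^2$ nonnegative on any half-plane $\{x_1 > s_0\}$ with $s_0 > 0$. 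The favourable sign of the right-hand side combines the strict supersolution property of $\phi$ with the fact that the cubic nonlinearity of the Painlev\'e equation pushes $y$ in the same direction as the linear term.

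For the boundary data I would fix any $s_0 > 0$ and let $D := \{x_1 > s_0\}$. Property (ii), already proven above, yields a bound $|y| \leq M_0$ on $\overline D$; choosing $M := M_0\, e^{\frac{2}{3} s_0^{3/2}}$ forces $\phi(s_0) \geq M_0$, hence $z \leq 0$ on $\partial D$, while $z$ is bounded above on $\overline D$. The complement $\R^2 \setminus \overline D$ contains the open cone
\begin{equation*}
\Sigma := \bigl\{ (x_1, x_2) : x_1 < s_0 - 1,\ |x_2| < (s_0 - 1) - x_1 \bigr\},
\end{equation*}
whose closure is disjoint from $\overline D$, so all hypotheses of Lemma \ref{maxpp} are met and yield $z \leq 0$ in $D$, i.e.\ $y \leq \phi$ on $\{x_1 \geq s_0\}$. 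Running the same argument with $-y$ in place of $y$ gives $|y(x_1, x_2)| \leq M\, e^{-\frac{2}{3} x_1^{3/2}}$ uniformly in $x_2$, as required. The main obstacle is essentially bookkeeping: verifying the geometric hypotheses of Lemma \ref{maxpp} (sign of $c$, the cone condition, and upper boundedness of $z$). The barrier computation itself is elementary and is essentially forced by the WKB asymptotic of the Airy equation $\Delta w = x_1 w$.
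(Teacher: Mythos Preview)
Your proof is correct and follows essentially the same approach as the paper: both compare $y$ to the barrier $M e^{-\frac{2}{3}x_1^{3/2}}$ and invoke Lemma~\ref{maxpp}. The only cosmetic difference is that the paper exploits the oddness of $y$ to work in the quadrant $\{x_1>1,\ x_2>0\}$ with the simpler coefficient $c=x_1$ (since $2y^3\geq 0$ there), whereas you work in the full half-plane $\{x_1>s_0\}$ with $c=x_1+2y^2$ and then apply the bound to $-y$ as well.
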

\begin{proof}
We define $\psi(x_1,x_2):=M e^{-\frac{2}{3} x_1^{3/2}}$, in the domain $D:=\{(x_1,x_2): \, x_1>1, x_2>0\}$, where $M\geq e^{\frac{2}{3}}\sup_{x_2\geq 0} y(1,x_2)$ is a constant. It is easy to see that $\Delta \psi \leq x_1\psi$ in $D$, and $\Delta (y-\psi) \geq x_1(y-\psi)$ in $D$. Since $y-\psi\leq 0$ on $\partial D$, it follows from Lemma \ref{maxpp} that $y\leq \psi$ in $D$.
\end{proof}
\begin{proof}[Proof of (v)]
We set $(t_1,t_2):=\big(-\frac{2}{3} (-x_1)^{\frac{3}{2}}, (-x_1)^{\frac{1}{2}}r\big)$, where $x_1\leq -1$ and $r\in\R$. Equivalently we have $(x_1,r)=\big(-(-\frac{3}{2} t_1)^{\frac{2}{3}}, t_2(-\frac{3}{2} t_1)^{-\frac{1}{3}}\big)$. Next we define $\tilde y(t_1,t_2):=\frac{\sqrt{2}}{(-\frac{3}{2} t_1)^{\frac{1}{3}}}\, y(x_1, r+x_2)$, for every $x_2\in\R$ \emph{fixed}, or equivalently
\begin{equation}\label{eqder}
 y(x_1, r+x_2)=\frac{(-x_1)^{\frac{1}{2}}}{\sqrt{2}}\tilde y(t_1,t_2).
\end{equation}
We are going to show that $\tilde y(t_1,t_2)$ is uniformly bounded up to the second derivatives, when $t_2$ belongs to a compact interval and $t_1\to-\infty$.
By differentiating \eqref{eqder} with respect to $s_1$ and $r$ we obtain
\begin{subequations}\label{eqderder}
\begin{equation}\label{eqder1}
\sqrt{2}y_{x_2}(x_1, r+x_2)=(-x_1)\tilde y_{t_2}(t_1,t_2), 
\end{equation}
\begin{equation}\label{eqder2}
\sqrt{2}y_{x_2x_2}(x_1, r+x_2)=(-x_1)^{\frac{3}{2}}\tilde y_{t_2t_2}(t_1,t_2), 
\end{equation}
\begin{equation}\label{eqder3}
 \sqrt{2}y_{x_1}(x_1, r+x_2)=-\frac{1}{2}(-x_1)^{-\frac{1}{2}}\tilde y(t_1,t_2)+(-x_1) \tilde y_{t_1}(t_1,t_2)-\frac{r}{2}\tilde y_{t_2}(t_1,t_2), 
\end{equation}
\begin{equation}\label{eqder4}
 \sqrt{2}y_{x_1x_2}=-\tilde y_{t_2}+(-x_1)^{\frac{3}{2}}\tilde y_{t_1t_2}-\frac{r}{2}(-x_1)^{\frac{1}{2}}\tilde y_{t_2,t_2}, 
\end{equation}
\begin{equation}\label{eqder5}
 \sqrt{2}y_{x_1x_1}=-\frac{1}{4}(-x_1)^{-\frac{3}{2}}\tilde y-\frac{3}{2}\tilde y_{t_1} +\frac{r}{4} (-x_1)^{-1}\tilde y_{t_2}
+(-x_1)^{\frac{3}{2}}\tilde y_{t_1t_1}-r(-x_1)^{\frac{1}{2}}\tilde y_{t_1t_2}+\frac{r^2}{4}(-x_1)^{-\frac{1}{2}}\tilde y_{t_2,t_2}. 
\end{equation}
\end{subequations}
Since by construction (cf. \eqref{boundd} in Lemma \ref{l2}) $y$ satisfies $|y(x_1,x_2)|=O(|-x_1|^{\frac{1}{2}})$ as $x_1\to-\infty$ (i.e. $\tilde y$ is bounded), we obtain by \eqref{painhom} and standard elliptic estimates \cite[\S 3.4 p. 37 ]{1987130} that 
\begin{equation}\label{eqder6}
\text{$|\nabla y(x_1,x_2)|=O(|-x_1|^{\frac{3}{2}})$ and $|D^2 y(x_1,x_2)|=O(|-x_1|^{\frac{5}{2}})$, as $x_1\to-\infty$.}
\end{equation}
From \eqref{eqder6} and \eqref{eqderder} it follows that
\begin{equation}\label{eqder7}
\text{$|\nabla \tilde y(t_1,t_2)|=O(|-x_1|^{\frac{1}{2}})$ and $|D^2 \tilde y(t_1,t_2)|=O(|-x_1|)$, as $x_1\to-\infty$,}
\end{equation}
provided that $(t_1,t_2)\in \Sigma_{t_0,r_0}:=\{ (t_1,t_2): t_1\leq t_0, \, |t_2|\leq r_0 (-\frac{3}{2} t_1)^{\frac{1}{3}}\}$, where $t_0<0$ and $r_0>0$ are arbitrary constants. In particular, we have $\sqrt{2}\Delta y(x_1,x_2)=(-x_1)^{\frac{3}{2}}\Delta \tilde y(t_1,t_2)+O(|-x_1|^{\frac{3}{2}})$, for $(t_1,t_2)\in \Sigma_{t_0,r_0}$. On the other hand it is clear by \eqref{painhom} that 
$\sqrt{2}\Delta y(x_1,x_2)=(-x_1)^{\frac{3}{2}}(\tilde y^3(t_1,t_2)-\tilde y(t_1,t_2))$, thus
\begin{equation}\label{eqder8}
\text{$|\Delta \tilde y(t_1,t_2)|$ and $|\nabla\tilde y(t_1,t_2)|$ are bounded, $\forall (t_1,t_2)\in \Sigma_{t_0,r_0}$.}
\end{equation}
Similarly, by differentiating once more equations \eqref{eqderder} with respect to $x_1$ and $r$, one can show that
\begin{equation}\label{eqder9}
\text{$|D^2\tilde y(t_1,t_2)|$ is bounded, $\forall (t_1,t_2)\in \Sigma_{t_0,r_0}$.}
\end{equation}
Next, we apply the theorem of Ascoli to the sequence $\tilde y(t_1+ l,t_2)$ as $ l\to-\infty$. Up to a subsequence $l_n\to -\infty$, we obtain via a diagonal argument, the convergence in $C^1_{\mathrm{ loc}}(\R^2)$ of $\tilde y_n(t_1,t_2):=\tilde y(t_1+l_n,t_2)$ to a bounded function $\tilde z(t_1,t_2)$ that we are going to determine. 
Our claim is that the limit $\tilde z$ is a minimal solution of the Allen-Cahn equation \eqref{acb2}, which is independent of the subsequence $l_n$. The proof of this property is based on the following energy considerations. Let $(e_1,e_2)$ be the canonical basis of $\R^2$.
The energy functional 
\begin{equation}\label{enph}
E_{\mathrm{P_{II}}}(y, A)=\int_{A-x_2e_2} \left[ \frac{1}{2} |\nabla y(x_1,r+x_2)|^2 +\frac{1}{2}  x_1 y^2(x_1,r+x_2) +\frac{1}{2} y^4(x_1,r+x_2)\right]\dd x_1\dd r,
\end{equation}
associated to \eqref{painhom}, becomes after changing variables as in (\ref{eqder})
\begin{equation}\label{enph2}
E_{\mathrm{P_{II}}}(y, A)=\tilde E_{\mathrm{P_{II}}}(\tilde y, \tilde A)=\tilde F(\tilde y, \tilde A)+ \tilde R(\tilde y, \tilde A),
\end{equation}
where
\begin{equation}\label{tildaa}
\tilde A:=\{(t_1(x_1),t_2(x_1,r)): (x_1,r)\in A-x_2e_2 \},
\end{equation}
\begin{equation}\label{festo}
\tilde F(\tilde y, \tilde A):=\int_{\tilde A } \frac{1}{2} \Big(-\frac{3}{2} t_1\Big)^{\frac{2}{3}}\left[\frac{1}{2}|\nabla \tilde y(t_1,t_2)|^2 -\frac{\tilde y^2(t_1,t_2)}{2}  +\frac{\tilde y^4(t_1,t_2)}{4}\right] \dd t_1\dd t_2,
\end{equation}
and 
\begin{equation}\label{resto}
\tilde R(\tilde y, \tilde A):=\int_{\tilde A}\left[ \frac{(\tilde y +t_2 \tilde y_{t_2})^2}{16 (-\frac{3}{2} t_1)^{\frac{4}{3}}} -\frac{(\tilde y+t_2 \tilde y_{t_2})\tilde y_{t_1}}{4 (-\frac{3}{2} t_1)^{\frac{1}{3}}} \right]\dd t_1 \dd t_2.
\end{equation}
Let $\tilde \phi(t_1,t_2)\in C^\infty_0(\R^2)$ be a test function such that 
$\tilde B:=\supp \tilde \phi \subset\{(t_1,t_2): c-d\leq t_1\leq c\} $, for some constants $c \in\R$ and $d>0$.
Given $l\in\R$, we consider the translated functions $\tilde \phi^{-l}(t_1,t_2):=\tilde \phi(t_1-l,t_2)$, and 
$\tilde y^l(t_1,t_2)=\tilde y(t_1+l,t_2)$.
Note that $\tilde B^{l}:=\supp \tilde \phi^{-l}=\tilde B+l e_1$, and 
$\supp\tilde  \phi^{-l} \subset\{(t_1,t_2):  t_1<-1\} $ when $l<1-c$. Thus, for $l<1-c$, we can define $\phi^{-l}\in C^\infty_0(\R^2)$ by $\phi^{-l}(x_1, r+x_2)=\frac{(-x_1)^{\frac{1}{2}}}{\sqrt{2}}\tilde \phi^{-l}(t_1,t_2)$ as in\eqref{eqder}. 
Let $B^{l}:=\{(x_1(t_1),r(t_1,t_2)+x_2): \, (t_1,t_2)\in \tilde B^{l}\}$.

We first examine the case where $x_2=0$, and assume that $\tilde \phi(t_1,t_2)=-\tilde \phi (t_1,-t_2)$. In view of the minimality of $y$ and \eqref{enph2}, we have
\begin{equation}\label{enph3}
\tilde E_{\mathrm{P_{II}}}(\tilde y+\tilde\phi^{-l}, \tilde B^{l})=E_{\mathrm{P_{II}}}(y+\phi^{-l}, B^{l})\geq E_{\mathrm{P_{II}}}(y, B^{l})=\tilde E_{\mathrm{P_{II}}}(\tilde y, \tilde B^{l}).
\end{equation}
On the one hand, it is clear that the boundedness of $\tilde y$ and \eqref{eqder8} imply that $\lim_{l\to-\infty}\tilde R(\tilde y+\tilde\phi^{-l}, \tilde B^{l})=0$ and $\lim_{l\to-\infty}\tilde R(\tilde y, \tilde B^{l})=0$.
Next, setting $t_0:=c+l$, we have 
\begin{equation}\label{t000}
\Big(-\frac{3}{2} t_1\Big)^{\frac{2}{3}}\leq \Big(-\frac{3}{2} t_0\Big)^{\frac{2}{3}}+d \Big(-\frac{3}{2} t_0\Big)^{-\frac{1}{3}}, \forall t_1 \in [t_0-d,t_0].
\end{equation}
Thus, we obtain
\begin{equation}\label{diven1}
\tilde F(\tilde y, \tilde B^{l})=\frac{1}{2} \Big(-\frac{3}{2} t_0\Big)^{\frac{2}{3}}\tilde G(\tilde y,\tilde B^{l})+O(|t_0|^{-\frac{1}{3}})=\frac{1}{2} \Big(-\frac{3}{2} t_0\Big)^{\frac{2}{3}}\tilde G(\tilde y^l,\tilde B)+O(|t_0|^{-\frac{1}{3}}),
\end{equation}
and
\begin{equation}\label{diven2}
\tilde F(\tilde y+\tilde\phi^{-l}, \tilde B^{l})=\frac{1}{2} \Big(-\frac{3}{2} t_0\Big)^{\frac{2}{3}}\tilde G(\tilde y+\tilde\phi^{-l},\tilde B^{l})+O(|t_0|^{-\frac{1}{3}})=\frac{1}{2} \Big(-\frac{3}{2} t_0\Big)^{\frac{2}{3}}\tilde G(\tilde y^l+\tilde\phi,\tilde B)+O(|t_0|^{-\frac{1}{3}}),
\end{equation}
where we have set $\tilde G(\tilde z, \tilde B):=\int_{\tilde B}(\frac{1}{2}|\nabla \tilde z|^2 -\frac{\tilde z^2}{2}  +\frac{\tilde z^4}{4}) \dd t$. Finally, since $\tilde y^{l_n}(t_1,t_2)\to \tilde z(t_1,t_2)$ in $C^1_{\mathrm{ loc}}(\R^2)$, as $n \to \infty$, we conclude that
\begin{equation}\label{enph3concl}
\tilde G(\tilde z+\tilde\phi,\tilde B)=\lim_{n\to\infty}\frac{2}{(-\frac{3}{2} (c+l_n))^{\frac{2}{3}}}\tilde E_{\mathrm{P_{II}}}(\tilde y^{l_n}+\tilde\phi, \tilde B)\geq 
\lim_{n\to\infty}\frac{2}{(-\frac{3}{2} (c+l_n))^{\frac{2}{3}}}\tilde E_{\mathrm{P_{II}}}(\tilde y^{l_n}+\tilde\phi, \tilde B)=\tilde G(\tilde z,\tilde B),
\end{equation}
or equivalently $E_{\mathrm{AC}}(\tilde z+\tilde\phi,\tilde B)\geq E_{\mathrm{AC}}(\tilde z,\tilde B)$. This means that $\tilde z$ is a minimal solution of the Allen-Cahn equation \eqref{acb2} for odd perturbations $\tilde \phi$. In particular $\tilde z \not\equiv 0$, and as a consequence of the maximum principle, $\tilde z(t_1,0)=0$, $\forall t_1\in \R$, and $\tilde z(t_1,t_2)\geq 0$, $\forall (t_1,t_2)\in \R\times (0,\infty)$, imply that $\tilde z(t_1,t_2)> 0$, $\forall (t_1,t_2)\in \R\times (0,\infty)$. Thus, from \cite[Theorem 1.5]{beres2}, it follows that $\tilde z$ is a function of only $t_2$, which is the heteroclinic connection $\tilde z(t_1,t_2)=\eta(t_2)=\tanh(t_2/\sqrt{2})$. Furthermore, since the limit $\tilde z$ is uniquely determined, the convergence $\tilde y^l(t_1,t_2)\to\tilde z(t_1,t_2)$ holds as $l\to-\infty$.

It remains to examine the case where $x_2\neq 0$. Without loss of generality we assume that $x_2>0$. Now \eqref{enph3} holds for arbitrary test functions $\tilde \phi(t_1,t_2)\in C^\infty_0(\R^2)$, since $B^{l}\subset \{(x_1,x_2): x_2>0\}$ as $l\to-\infty$. Repeating the previous arguments we find that $\tilde z$ is a nonnegative minimal solution of \eqref{acb2}. Applying \cite[Corollary 5.2]{book}, we deduce that $\tilde z\equiv 1$. This completes the proof of (v).
\end{proof}

\begin{proof}[Proof of (vi) and (vii)]
The proofs of (vi) and (vii) which are based on the moving plane method, follow from the next lemmas.
\begin{lemma}\label{lll1}
We have $y_{x_1}(x_1,x_2)<0$, $\forall x_1\geq 0$, $\forall x_2>0$. In addition, for every $d>0$, there holds $\sup_{x_2\geq d}  y_{x_1}(1,x_2)<0$, and  $\inf_{x_2\geq d}  y(1,x_2)>0$.
\end{lemma}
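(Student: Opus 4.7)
The plan is to prove the three statements in order. For the pointwise monotonicity, I would use the moving plane method in the $x_1$-direction. Fix $\lambda\geq 0$ and set $y_\lambda(x_1,x_2):=y(2\lambda-x_1,x_2)$ and $u:=y_\lambda-y$ on the quarter-plane $\Sigma_\lambda:=\{x_1>\lambda,\,x_2>0\}$. A direct computation using \eqref{painhom} yields
\begin{equation*}
\Delta u - c(x)\,u = 2(\lambda-x_1)\,y_\lambda,\qquad c(x):=x_1+2(y_\lambda^2+y_\lambda\,y+y^2),
\end{equation*}
with $c\geq 0$ in $\Sigma_\lambda$ (since $x_1>\lambda\geq 0$) and strictly negative right-hand side (using (i), so $y_\lambda>0$ there). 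The function $-u=y-y_\lambda\leq y$ is bounded above by (ii), vanishes on $\partial\Sigma_\lambda$ since $y(\cdot,0)\equiv 0$, and $\overline{\Sigma_\lambda}$ is disjoint from the open cone $\{x_1<-1,\,x_2<-1\}$ with vertex $(-1,-1)$. Thus Lemma \ref{maxpp} applied to $-u$ gives $u\geq 0$ in $\Sigma_\lambda$; strict positivity then follows because at any interior zero $x_0$ the equation would force $\Delta u(x_0)<0$, while a local minimum requires $\Delta u(x_0)\geq 0$. Hopf's lemma at the boundary segment $\{x_1=\lambda,\,x_2>0\}$ then yields $u_{x_1}(\lambda,x_2)=-2\,y_{x_1}(\lambda,x_2)>0$. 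Letting $\lambda$ range over $[0,\infty)$ proves the first claim.

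For the lower bound $\inf_{x_2\geq d}y(1,x_2)>0$, I would argue by contradiction and compactness. If the infimum vanishes, a sequence $x_2^n\geq d$ satisfies $y(1,x_2^n)\to 0$. A bounded subsequence would produce $y(1,x_2^*)=0$ for some $x_2^*\geq d$, contradicting (i). Otherwise $x_2^n\to\infty$, and by (ii) and elliptic regularity the translates $y^n(x_1,x_2):=y(x_1,x_2+x_2^n)$ converge in $C^2_{\mathrm{loc}}$ (along a subsequence) to a bounded nonnegative solution $y^*$ of \eqref{painhom} with $y^*(1,0)=0$. The strong maximum principle then forces $y^*\equiv 0$ locally at $(1,0)$, and unique continuation extends this to $y^*\equiv 0$ on $\R^2$. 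The crux is showing that $y^*$ is minimal for \emph{arbitrary} test functions: given $\phi\in C^\infty_0(\R^2)$ supported in a compact $K$, for $n$ large the odd function $\Phi^n(x_1,x_2):=\phi(x_1,x_2-x_2^n)-\phi(x_1,-x_2-x_2^n)$ has its two pieces of support in disjoint half-planes, and the change of variable $x_2\mapsto -x_2$ combined with the oddness of $y$ reduces (iii) applied with $\Phi^n$ to $E_{\mathrm{P_{II}}}(y^n+\phi,K)\geq E_{\mathrm{P_{II}}}(y^n,K)$. Passing to the $C^2_{\mathrm{loc}}$ limit, $y^*\equiv 0$ would be minimal; but this fails because a small bump $\phi$ supported at very negative $x_1$ gives $E_{\mathrm{P_{II}}}(\phi)<0$ via the dominant negative term $\tfrac{1}{2}x_1\phi^2$. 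This contradiction establishes the second claim.

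The third claim is handled by the same scheme. If $\sup_{x_2\geq d}y_{x_1}(1,x_2)=0$, a sequence $x_2^n\geq d$ gives $y_{x_1}(1,x_2^n)\to 0$; a bounded subsequence contradicts the first claim by continuity. For $x_2^n\to\infty$, the limit $y^*$ of the translates now satisfies $y^*(1,0)\geq\eta_0>0$ by the second claim applied to each $y^n$, while $y^*_{x_1}(1,0)=0$ and $y^*_{x_1}(x_1,x_2)\leq 0$ for $x_1\geq 0$ come from the first claim applied to each $y^n$ and passage to the limit. Hence $v:=y^*_{x_1}$ attains an interior maximum equal to $0$ at $(1,0)$; differentiating \eqref{painhom} gives $\Delta v-(x_1+6(y^*)^2)\,v=y^*$, so at $(1,0)$ we find $\Delta v(1,0)=y^*(1,0)>0$, contradicting $\Delta v(1,0)\leq 0$ at the maximum. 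The main obstacle throughout is the transfer of the odd minimality of $y$ to the full minimality of the limit $y^*$: one must verify that the two pieces of $\supp\Phi^n$ are symmetric across $\{x_2=0\}$ and disjoint for $n$ large, allowing a clean energy splitting, and that the resulting inequality survives the $C^2_{\mathrm{loc}}$ limit on an arbitrary compact set.
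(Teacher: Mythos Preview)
Your proof is correct and, for the first claim, identical to the paper's: the same reflection $y(2\lambda-x_1,x_2)$, the same application of Lemma~\ref{maxpp} on the quarter-plane, and Hopf's lemma at $x_1=\lambda$.

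For the remaining two claims the strategy (contradiction via translates $y(\cdot,\cdot+x_2^n)$ and Ascoli) is also the paper's, but two points differ. First, you reverse the order: you establish $\inf_{x_2\geq d}y(1,x_2)>0$ first, and then use it to dispatch $\sup_{x_2\geq d}y_{x_1}(1,x_2)<0$ by a clean pointwise contradiction ($\Delta v(1,0)=y^*(1,0)>0$ at an interior maximum of $v=y^*_{x_1}$). The paper instead proves the derivative bound first, via the strong maximum principle applied to \eqref{eqqq11} to force $\tilde y_{x_1}\equiv 0$ on $\{x_1\geq 0\}$, then $\tilde y\equiv 0$ by decay and unique continuation, and finally invokes minimality; it then says the proof for $\inf y>0$ is ``similar''. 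Your route avoids needing minimality of the limit for the derivative bound, at the cost of proving the value bound first. Second, you make explicit the passage from odd-minimality of $y$ to full minimality of the translated limit $y^*$ via the antisymmetrized test function $\Phi^n(x_1,x_2)=\phi(x_1,x_2-x_2^n)-\phi(x_1,-x_2-x_2^n)$; the paper simply asserts that the limit is ``a nonnegative minimal solution'' without spelling this out. Your argument here is sound (the two supports separate for large $n$, the energy splits, and oddness of $y$ turns the lower piece into a copy of the upper inequality), and it is precisely what is needed to exclude $y^*\equiv 0$.
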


\begin{proof}
Given $\lambda\geq 0$, we define the function $\psi_\lambda(x_1,x_2):=y(x_1,x_2)-y(-x_1+2\lambda,x_2)$ for $(x_1,x_2)\in D_\lambda:=\{ (x_1,x_2): x_1 > \lambda , x_2 >0\}$.  One can check that 
$\psi_\lambda=0$ on $\partial D_\lambda$, and $$\Delta \psi_\lambda-c(x_1,x_2)\psi_\lambda= 2(x_1-\lambda)y(-x_1+2\lambda,x_2)\geq 0,$$ with $c(x_1,x_2)=x_1+2(y^2(x_1,x_2)+y(x_1,x_2)y(-x_1+2\lambda,x_2)+y^2(-x_1+2\lambda,x_2))\geq 0$.
Furthermore, $\psi_\lambda$ is bounded above by Theorem \ref{corpain} (ii), and not identically zero by Theorem \ref{corpain} (v).
As a consequence of Lemma \ref{maxpp}, it follows that
$\psi_{\lambda}(x_1,x_2)< 0$, $\forall x_1> \lambda$, $\forall x_2> 0$, and thus by Hopf's Lemma $\frac{\partial \psi_\lambda}{\partial x_1}(\lambda,x_2)=2y_{x_1}(\lambda,x_2)<0$, $\forall x_2>0$. To establish that $\sup_{x_2\geq d}  y_{x_1}(1,x_2)<0$, we proceed by contradiction and assume the existence of a sequence $\{l_n\}$ such that $\lim_{n\to\infty} l_n=\infty$ and 
$\lim_{n\to\infty} y_{x_1}(1,l_n)=0$. Next, we set $\tilde y_n(x_1,x_2)=y(x_1,x_2+l_n)$. In view of the bounds provided in Theorem \ref{corpain} (ii), we obtain by the theorem of Ascoli that (up to subsequence) $\tilde y_n$ converges in $C^1_{\mathrm{loc}}$ to a nonnegative minimal solution $\tilde y$ of \eqref{painhom}. Since $\tilde y_{x_1}(1,0)=\lim_{n\to\infty}y_{x_1}(1,l_n)=0$, and $\tilde y_{x_1}(x_1,x_2)\leq 0$, $\forall x_1\geq 0$, $\forall x_2 \in \R$, the maximum principle applied to 
\begin{equation}\label{eqqq11}
\Delta \tilde y_{x_1}=\tilde y +(x_1+6 \tilde y^2)\tilde y_{x_1}\geq(x_1+6 \tilde y^2)\tilde y_{x_1},
\end{equation}
 implies that $\tilde y_{x_1}(x_1,x_2)= 0$, $\forall x_1\geq 0$, $\forall x_2 \in \R$. Then, since $\lim_{x_1\to\infty}\tilde y(x_1,x_2)=0$, $\forall x_2\in\R$, it follows that $\tilde y\equiv 0$ in the half-plane $x_1\geq 0$. Finally, we deduce by unique continuation that $\tilde y\equiv 0$ in $\R^2$, which is a contradiction since $\tilde y$ is minimal. Thus we have established that 
$\sup_{x_2\geq d}  y_{x_1}(1,x_2)<0$. The proof that $\inf_{x_2\geq d}  y(1,x_2)>0$ is similar.
\end{proof}
\begin{lemma}\label{lll2}
For every vector $\n=e^{i (\theta+\frac{\pi}{2})}\in \C\sim\R^2$, with $\theta \in (0,\frac{\pi}{2})$, there exists $s_\n>0$ such that $\nabla y(x_1,x_2)\cdot \n>0$, $\forall x_1 >s_\n$, $\forall x_2 >0$.
\end{lemma}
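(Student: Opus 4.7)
The plan is to apply the moving plane method perpendicular to $\n$. Write $\n=(-\sin\theta,\cos\theta)$, and for $\lambda\in\R$ let $L_\lambda=\{x:x\cdot\n=\lambda\}$ with $R_\lambda$ the reflection across $L_\lambda$. Set
\[
\Sigma_\lambda:=\{x:x\cdot\n<\lambda\}\cap\{x_2>0\},\qquad \psi_\lambda(x):=y(x)-y(R_\lambda x).
\]
The crucial geometric point is that for $x\in\Sigma_\lambda$ one has $(R_\lambda x)_2=x_2+2(\lambda-x\cdot\n)\cos\theta>x_2>0$, so $R_\lambda$ strictly preserves the upper half-plane and, by property (i), $y\circ R_\lambda>0$ on $\Sigma_\lambda$.

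A direct computation using $\Delta(y\circ R_\lambda)=(\Delta y)\circ R_\lambda$ and \eqref{painhom} yields
\[
\Delta\psi_\lambda-c_\lambda(x)\,\psi_\lambda=2(\lambda-x\cdot\n)\sin\theta\,y(R_\lambda x),
\]
with $c_\lambda(x)=x_1+2\bigl(y^2(x)+y(x)y(R_\lambda x)+y^2(R_\lambda x)\bigr)$. The right-hand side is strictly positive on $\Sigma_\lambda$; on the boundary, $\psi_\lambda=0$ on $L_\lambda\cap\{x_2>0\}$, and $\psi_\lambda<0$ on the $x_1$-axis segment lying in $\overline{\Sigma_\lambda}$ (since $y$ vanishes there while $y\circ R_\lambda>0$). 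A standard chain rule shows that $\partial_\n\psi_\lambda=2\,\nabla y\cdot\n$ on $L_\lambda$, so once $\psi_\lambda\leq 0$ on $\Sigma_\lambda$ has been established, Hopf's lemma applied to $-\psi_\lambda$ will produce the desired strict inequality $\nabla y\cdot\n>0$ at each point of $L_\lambda\cap\{x_2>0\}$.

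For $\lambda\leq 0$ the region $\Sigma_\lambda$ is contained in $\{x_1\geq -\lambda/\sin\theta\geq 0\}$, so $c_\lambda\geq 0$, and Lemma \ref{maxpp} (applied with a cone inside $\{x_2<-1\}$ disjoint from $\overline{\Sigma_\lambda}$, and with $\psi_\lambda$ bounded above by (ii)) gives $\psi_\lambda\leq 0$ in $\Sigma_\lambda$. This handles exactly the sector $\{0<x_2\leq x_1\tan\theta\}$. To reach the complementary part of $\{x_1>s_\n,\,x_2>0\}$, one extends the argument to $\lambda>0$: then $\Sigma_\lambda$ enters $\{x_1<0\}$ where $c_\lambda$ may fail to be nonnegative, so one truncates and works instead on $\Sigma_\lambda^K:=\Sigma_\lambda\cap\{x_1>K\}$ with $K$ large, on which $c_\lambda\geq K>0$. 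The only new boundary component is the segment $\{x_1=K\}\cap\Sigma_\lambda$, on which the inequality $\psi_\lambda\leq 0$ is checked by pairing the exponential upper bound $|y(K,x_2)|=O\bigl(e^{-\tfrac{2}{3}K^{3/2}}\bigr)$ from Lemma \ref{expcvv} against a quantitative positive lower bound on $y\circ R_\lambda$ along the compact image $R_\lambda(\{x_1=K\}\cap\Sigma_\lambda)\subset\{x_2>0\}$ (available from positivity of $y$ in the upper half-plane together with the estimates of Lemma \ref{lll1}). Rerunning the same Lemma \ref{maxpp}/Hopf step then yields $\nabla y\cdot\n>0$ on $L_\lambda\cap\{x_2>0\}$ for each $\lambda>0$ of interest, and varying $\lambda$ one covers $\{x_1>s_\n,\,x_2>0\}$ for a suitable $s_\n=s_\n(\theta,K)$.

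The main obstacle is precisely this truncation step: $K=K(\theta)$ must be chosen large enough that, uniformly over the relevant range of $\lambda>0$, the exponentially small bound on $|y(K,\cdot)|$ is dominated by the lower bound on $y\circ R_\lambda$ along the moving segment $\{x_1=K\}\cap\Sigma_\lambda$, and $s_\n$ then has to be fixed so that every target point lies on a reachable line $L_\lambda$. All other ingredients — the sign analysis of $\psi_\lambda$ on $\partial\Sigma_\lambda$, the non-negativity of $c_\lambda$ after truncation, the identity $\partial_\n\psi_\lambda=2\,\nabla y\cdot\n$, and the applicability of Lemma \ref{maxpp} in this unbounded geometry — are essentially routine once the truncation parameter is chosen.
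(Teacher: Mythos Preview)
Your reflection argument for $\lambda\leq 0$ is clean and correct: it gives $\nabla y\cdot\n>0$ on the sector $\{0<x_2\leq x_1\tan\theta\}$. The gap is entirely in the truncation step for $\lambda>0$, and it is not ``essentially routine once the truncation parameter is chosen''. Two concrete obstructions:

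\emph{(a) The corner at $L_\lambda\cap\{x_1=K\}$.} As $x_2$ approaches the top of the segment $\{x_1=K\}\cap\Sigma_\lambda$, the reflected point $R_\lambda(K,x_2)$ tends to $(K,x_2)$ itself, so $y\circ R_\lambda$ is of the \emph{same} exponentially small order $O(e^{-\frac{2}{3}K^{3/2}})$ as $y(K,x_2)$. There is no ``quantitative positive lower bound on $y\circ R_\lambda$'' that dominates the upper bound on $y(K,\cdot)$ near that corner; the two quantities are comparable and you would need to know the sign of their difference, which is exactly what you are trying to prove.

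\emph{(b) Uniformity in $\lambda$.} To cover points with arbitrarily large $x_2$ you must take $\lambda\to\infty$. Then the image $R_\lambda(\{x_1=K\}\cap\Sigma_\lambda)$ is a segment of length $\sim\lambda$ whose far endpoint has first coordinate $K\cos 2\theta-2\lambda\sin\theta\to-\infty$. Lemma~\ref{lll1} gives monotonicity in $x_1$ and a lower bound on $y$ only for $x_1\geq 0$ (and explicitly only at $x_1=1$), so you have no uniform positive lower bound for $y$ on these moving, unbounded images. Even on the portion with $0\leq x_1\leq K$, monotonicity in $x_1$ alone does not compare $y(p',q')$ with $y(K,x_2)$ because $q'\neq x_2$ and the sign of $y_{x_2}$ is only established \emph{after} this lemma.

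The paper avoids all of this by not reflecting in the direction $\n$ at all. Instead it works in the fixed quadrant $\{x_1>1,\,x_2>0\}$ and proves two pointwise comparison inequalities via Lemma~\ref{maxpp}: first $k_1\,y_{x_1}\leq -\sqrt{x_1}\,y$ using the auxiliary function $\psi=k_1 y_{x_1}+\sqrt{x_1}\,y$, and then $y_{x_2}\geq -k_2\,y$ using $\psi=-y_{x_2}-k_2 y$, with the boundary data at $x_1=1$ supplied by Lemma~\ref{lll1}. Writing $\nabla y/|\nabla y|=e^{i\phi}$ one gets $\tan\phi\leq k_1k_2/\sqrt{x_1}$, hence $\nabla y\cdot\n>0$ once $x_1>s_\n:=(k_1k_2/\tan\theta)^2$. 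This quantitative control of the \emph{direction} of $\nabla y$ is what your reflection scheme lacks for $\lambda>0$.
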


\begin{proof}
Our first claim is that there is a constant $k_1>0$, such that $k_1 y_{x_1} (x_1,x_2)\leq -\sqrt{x_1} y(x_1,x_2)$, $\forall x_1\geq 1$, $\forall x_2\geq 0$. Indeed, let $\psi(x_1,x_2)=k_1 y_{x_1}(x_1,x_2)+\sqrt{x_1} y(x_1,x_2)$ for $(x_1,x_2)\in D:=\{x_1>1, x_2>0\}$, where the constant $k_1$ will be adjusted later. It is clear that $\psi(x_1,0)=0$, $\forall x_1\geq 1$. We also note that
$y_{x_1x_2}(1,0)<0$, since  the function $y_{x_1}$ vanishes at $(1,0)$, is negative in $\{ x_1> 0,  x_2> 0\}$, and satisfies \eqref{eqqq11}. This and  $\sup_{x_2\geq d}  y_{x_1}(1,x_2)<0$, $\forall d>0$, imply that when $k_1$ is large enough, we have $\psi(1,x_2)\leq 0$, $\forall x_2\geq 0$. Next, we compute
\begin{align*}
\Delta \psi&=\Big(x_1+6y^2+\frac{1}{k_1\sqrt{x_1}}\Big)k_1y_{x_1}+\Big(x_1+2y^2+\frac{k_1}{\sqrt{x_1}}-\frac{1}{4x_1^2}\Big) \sqrt{x_1}y\\
&=\Big(x_1+2y^2+\frac{k_1}{\sqrt{x_1}}-\frac{1}{4x_1^2}\Big) \psi+\Big(4y^2+\frac{1}{k_1\sqrt{x_1}}-\frac{k_1}{\sqrt{x_1}}+\frac{1}{4x_1^2}\Big) k_1 y_{x_1}.
\end{align*}
By choosing $k_1$ large enough we can ensure that $\big(x_1+2y^2+\frac{k_1}{\sqrt{x_1}}-\frac{1}{4x_1^2}\big) \geq 0$ and $\big(4y^2+\frac{1}{k_1\sqrt{x_1}}-\frac{k_1}{\sqrt{x_1}}+\frac{1}{4x_1^2}\big)\leq 0$, when $x_1\geq 1$ and $x_2\geq 0$.
Thus, by applying Lemma \ref{maxpp}, our claim follows.

Similarly, we are going to establish that there is a constant $k_2>0$, such that $y_{x_2} (x_1,x_2)\geq -k_2 y(x_1,x_2)$, $\forall x_1\geq 1$, $\forall x_2\geq 0$. To do this we let $\psi(x_1,x_2)=- y_{x_2}(x_1,x_2)-k_2 y(x_1,x_2)$ for $(x_1,x_2)\in D$, where the constant $k_2$ will again be adjusted later. We first note that
$y_{x_2}(x_1,0)>0$, $\forall x_1 \in\R$, since the function $y$ vanishes at $(x_1,0)$, is positive in $\{x_2> 0\}$, and satisfies \eqref{painhom}. This and  $\inf_{x_2\geq d}  y(1,x_2)>0$, $\forall d>0$, imply that when $k_2$ is large enough, we have $\psi(1,x_2)\leq 0$, $\forall x_2\geq 0$. On the other hand, it is clear that $\psi(x_1,0)<0$, $\forall x_1\geq 1$.
 Next, we compute
\begin{align*}
\Delta \psi&=(x_1+6y^2)(-y_{x_2})+(x_1+2y^2)(-k_2y)\geq (x_1+6y^2)\psi.
\end{align*}
Thus, by applying Lemma \ref{maxpp}, it follows that $\psi\leq 0$ in $D$.

Finally, setting $\frac{\nabla y}{|\nabla y|}=e^{i\phi}$ when $(x_1,x_2)\in D$ (with $\phi \in (\frac{\pi}{2},\frac{3\pi}{2})$), we find that 
\[
\tan \phi=\frac{y_{x_2}}{y_{x_1}}\leq \frac{k_1k_2}{\sqrt{x_1}}\Longrightarrow \phi \leq \pi+\arctan \Big(\frac{k_1k_2}{\sqrt{x_1}}\Big).
\]
As a consequence, we have $\nabla y(x_1,x_2)\cdot \n>0$ if $\theta \in \big(\arctan \big(\frac{k_1k_2}{\sqrt{x_1}}\big),\frac{\pi}{2}\big)$, that is, if $x_1>s_\n:=\big(\frac{k_1k_2}{\tan \theta}\big)^2$. 
\end{proof}

\begin{lemma}\label{movingplane}
Let $\theta\in (0,\frac{\pi}{2})$ be fixed, and consider for every $\lambda \in \R$ the reflection $\sigma_\lambda$ with respect to the line $\Gamma_\lambda:=\{(x_1,x_2): x_2=\tan\theta (x_1-\lambda)\}$, and the domain 
$D_\lambda:=\{(x_1,x_2): 0<x_2<\tan\theta (x_1-\lambda)\}$.
Then, the function $\psi_\lambda(x_1,x_2):=y(x_1,x_2)-y(\sigma_\lambda(x_1,x_2))$ 
is negative in $D_\lambda$, for every $\lambda\in\R$. 
\end{lemma}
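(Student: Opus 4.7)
My plan is to run the moving plane method by sliding $\lambda$ from large values downward. Since $\sigma_\lambda$ is an isometry and $y$ solves \eqref{painhom}, setting $(x_1^\lambda,x_2^\lambda):=\sigma_\lambda(x_1,x_2)$ a direct computation gives
\begin{equation*}
\Delta\psi_\lambda - c_\lambda(x)\,\psi_\lambda = (x_1 - x_1^\lambda)\,y(\sigma_\lambda(x)) \quad \text{in } D_\lambda,
\end{equation*}
with $c_\lambda(x):= x_1 + 2\bigl(y^2(x)+y(x)y(\sigma_\lambda(x))+y^2(\sigma_\lambda(x))\bigr)$. A short reflection computation yields $x_1^\lambda<x_1$ and $x_2^\lambda>x_2>0$ for $x\in D_\lambda$, so by property (i) $y(\sigma_\lambda(x))>0$ and the right-hand side is nonnegative. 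Moreover $\psi_\lambda = 0$ on $\Gamma_\lambda$ and $\psi_\lambda(x_1,0)=-y(\sigma_\lambda(x_1,0))<0$ for $x_1>\lambda$.

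For the initial range $\lambda\geq 0$, I have $c_\lambda \geq x_1 \geq \lambda\geq 0$ in $D_\lambda$, $\psi_\lambda$ is bounded above (by Theorem \ref{corpain}(ii), $y$ is bounded on $\overline{D_\lambda}\subset\{x_1\geq 0\}$), and $\overline{D_\lambda}$ is disjoint from the interior of the third quadrant. Lemma \ref{maxpp} then gives $\psi_\lambda\leq 0$ in $D_\lambda$, and the strong maximum principle combined with strict negativity on the $x_1$-axis portion of $\partial D_\lambda$ promotes this to $\psi_\lambda<0$ in $D_\lambda$.

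Next I set $\lambda^*:=\inf\{\lambda\in\R : \psi_\mu<0 \text{ in } D_\mu\text{ for all }\mu\geq\lambda\}\in[-\infty,0]$ and argue by contradiction that $\lambda^*=-\infty$. If $\lambda^*$ were finite, pointwise continuity of $\psi_\lambda$ in $\lambda$ gives $\psi_{\lambda^*}\leq 0$ in $D_{\lambda^*}$, and rewriting $\Delta\psi_{\lambda^*}-c_{\lambda^*}^+\psi_{\lambda^*}\geq -c_{\lambda^*}^-\psi_{\lambda^*}\geq 0$ puts us in the standard strong-maximum-principle setting, leaving either $\psi_{\lambda^*}<0$ strictly or $\psi_{\lambda^*}\equiv 0$. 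The identically-zero alternative is ruled out by analyticity of $y$: $y\circ\sigma_{\lambda^*}\equiv y$ in $D_{\lambda^*}$ extends to all of $\R^2$, and composing with the oddness $y\circ\tau=-y$ (for $\tau(x_1,x_2)=(x_1,-x_2)$) gives $y\circ R = -y$, where $R:=\sigma_{\lambda^*}\circ\tau$ is the rotation by $2\theta$ about $(\lambda^*,0)$. Evaluating at $(\lambda^*+r,0)$ with $r>0$ then forces $y$ to vanish on the ray $\{(\lambda^*+r\cos 2\theta,\, r\sin 2\theta):r>0\}$, which lies in the upper half-plane and contradicts property (i).

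Thus $\psi_{\lambda^*}<0$ strictly in $D_{\lambda^*}$, and it remains to contradict the minimality of $\lambda^*$ by propagating this strict negativity to $\lambda$ slightly below $\lambda^*$. Supposing the contrary, I take $\lambda_n\uparrow\lambda^*$ from below and interior local maxima $x_n\in D_{\lambda_n}$ at which $\psi_{\lambda_n}(x_n)\geq 0$; from $\nabla\psi_{\lambda_n}(x_n)=0$, $\Delta\psi_{\lambda_n}(x_n)\leq 0$ and the equation we deduce $c_{\lambda_n}(x_n)\psi_{\lambda_n}(x_n)\leq 0$, which confines $x_n$ to a bounded set. Passing to a subsequential limit $x^*\in\overline{D_{\lambda^*}}$ yields $\psi_{\lambda^*}(x^*)=0$ and $\nabla\psi_{\lambda^*}(x^*)=0$, and each position of $x^*$ must be excluded: an interior $x^*$ contradicts strict negativity; $x^*\in\Gamma_{\lambda^*}\setminus\{(\lambda^*,0)\}$ contradicts Hopf's lemma applied to $\psi_{\lambda^*}$; $x^*$ on the open $x_1$-axis portion contradicts $\psi_{\lambda^*}(x^*)=-y(\sigma_{\lambda^*}(x^*))<0$; and the corner $x^*=(\lambda^*,0)$ is ruled out by a Serrin-type corner lemma exploiting the wedge angle $\theta\in(0,\pi/2)$ together with the oddness of $y$. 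I expect the corner analysis at $(\lambda^*,0)$ to be the principal obstacle, since the standard Hopf lemma fails at the non-smooth vertex and one has to exploit the specific geometry of $D_{\lambda^*}$ and the symmetries of $y$.
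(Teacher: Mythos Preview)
Your overall strategy is sound and in fact cleaner than the paper's in one respect: the paper starts the moving plane by invoking the directional monotonicity of Lemma~\ref{lll2} (which in turn rests on Lemma~\ref{lll1}) and splits into the cases $\theta\in(0,\tfrac\pi4)$ and $\theta\in[\tfrac\pi4,\tfrac\pi2)$, the second case requiring the asymptotic information (v). Your initial step for $\lambda\geq 0$ via Lemma~\ref{maxpp} bypasses all of that, and your continuation argument, once $x_n$ is known to be bounded, is the standard one.

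The genuine gap is precisely the existence of the interior local maxima $x_n$. The domains $D_{\lambda_n}$ are unbounded, and ``$\psi_{\lambda_n}\geq 0$ somewhere'' does not by itself produce an interior point with $\nabla\psi_{\lambda_n}=0$ and $\Delta\psi_{\lambda_n}\leq 0$; the supremum could a priori be approached only along a sequence escaping to infinity. You use the local-max condition to derive boundedness, so you cannot presuppose it. This is fixable without Lemma~\ref{lll2}: apply Lemma~\ref{maxpp} to $\psi_{\lambda_n}-\max(0,M)$ on the subdomain $D_{\lambda_n}\cap\{x_1>0\}$, where $M:=\sup\{\psi_{\lambda_n}(0,x_2):0\leq x_2\leq -\lambda_n\tan\theta\}$ and $c_{\lambda_n}\geq 0$; this yields $\psi_{\lambda_n}\leq\max(0,M)$ on $D_{\lambda_n}\cap\{x_1>0\}$, so $\sup_{D_{\lambda_n}}\psi_{\lambda_n}$ is attained in the compact triangle $\overline{D_{\lambda_n}}\cap\{x_1\leq 0\}$, necessarily at an interior point of $D_{\lambda_n}$ since $\psi_{\lambda_n}\leq 0$ on $\partial D_{\lambda_n}$. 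With this in hand your argument goes through.

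Two smaller remarks. First, ruling out $\psi_{\lambda^*}\equiv 0$ needs no analyticity: simply note $\psi_{\lambda^*}(x_1,0)=-y(\sigma_{\lambda^*}(x_1,0))<0$ for $x_1>\lambda^*$. Second, the corner is not the obstacle you fear. Since $y$ is odd in $x_2$ one has $y_{x_1}(x_1,0)=0$, and Hopf's lemma applied to $y$ itself gives $y_{x_2}(x_1,0)>0$; a direct computation then shows $\nabla\psi_{\lambda^*}(\lambda^*,0)=(I-D\sigma_{\lambda^*})\nabla y(\lambda^*,0)=y_{x_2}(\lambda^*,0)\,(-\sin 2\theta,\,1+\cos 2\theta)\neq 0$, which immediately excludes $x^*=(\lambda^*,0)$. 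This is exactly how the paper handles the vertex, and no Serrin-type corner lemma is needed.
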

\begin{proof}
We set $\n=e^{i (\theta+\frac{\pi}{2})}$ as in Lemma \ref{lll2}, and denote by $(p',q')$ the image by $\sigma_\lambda$ of a point $(p,q)\in D_\lambda$, and by $D'_\lambda$ the set $\sigma_\lambda(D_\lambda)$. 
It is obvious that $\psi_\lambda(x_1,0)< 0$, $\forall x_1> \lambda$, and that $\psi_\lambda(x_1,x_2)= 0$, $\forall (x_1,x_2)\in \Gamma_\lambda$. Moreover, $\psi_\lambda$ satisfies 
$$\Delta \psi_\lambda(p,q)-c(p,q)\psi_\lambda= (p-p')y(p',q')\geq 0, \quad  \forall (p,q)\in D_\lambda,$$ with $c(p,q)=p+2(y^2(p,q)+y(p,q)y(p',q')+y^2(p',q'))$.
For each $\lambda\in\R$ we consider the statement
\begin{equation}\label{statem}
\psi_\lambda(p,q) <0, \quad \forall (p,q)\in D_\lambda.
\end{equation}

We shall first establish Lemma \ref{movingplane} in the case where $\theta\in (0,\frac{\pi}{4})$.
According to Lemma \ref{lll2}, \eqref{statem} is valid for each $\lambda\geq s_\n$.
Set $\lambda_0=\inf\{\lambda\in \R: \psi_\mu<0 \text{ holds in $ D_\mu$, for each $\mu\geq \lambda$} \}$. We will prove $\lambda_0=-\infty$.
Assume instead $\lambda_0\in\R$. Then, there exist a sequence $\lambda_k<\lambda_0$ such that $\lim_{k\to\infty}\lambda_k=\lambda_0$, and a sequence
$(p_k,q_k)\in D_{\lambda_k}$, such that $y(p_k,q_k)\geq y(p'_k,q'_k)$. According to Lemma \ref{lll2}, we have $p'_k\leq s_\n$, thus the sequence $(p_k,q_k)$ is bounded.
Up to subsequence we may assume that $\lim_{k\to\infty}(p_k,q_k)=(p_0,q_0)\in\overline{D_{\lambda_0}}$, with $p'_0\leq s_\n$.
By definition of $\lambda_0$, we have $\psi_{\lambda_0}\leq 0$ in $D_{\lambda_0}$, and $\psi_{\lambda_0}(p_0,q_0)=0$ i.e. $y(p_0,q_0)=y(p'_0,q'_0)$.
Now we distinguish the following cases. If $(p_0,q_0)\in D_{\lambda_0}$, the maximum principle implies that $\psi_{\lambda_0}\equiv 0$ in $D_{\lambda_0}$. Clearly, this situation is excluded, since $y$ is positive
in the half-plane $\{x_2>0\}$.
On the other hand, the maximum principle also implies that $\frac{\partial \psi_{\lambda_0}}{\partial \n}(p,q)=2\frac{\partial y}{\partial \n}(p,q)>0$, provided that $(p,q)\in \Gamma_{\lambda_0}$ and $q>0$.
Furthermore, the previous inequality still holds at the vertex $(p,q)=(\lambda_0,0)$, since $y_{x_2}(x_1,0)>0$ and $y_{x_1}(x_1,0)=0$, $\forall x_1\in\R$ (cf. the proof of Lemma \ref{lll2}).
As a consequence, in a neighborhood of the line segment $\{(x_1,x_2): x_2=\tan\theta (x_1-\lambda), 0\leq x_1\leq s_\n\}$, we have that $\frac{\partial y}{\partial \n}>0$, and it follows that $(p_0,q_0)$
cannot belong to $\Gamma_{\lambda_0}$. Finally, since the case where $p_0>\lambda_0$ and $q_0=0$ is ruled out (because $y$ is positive in the half-plane $\{x_2>0\}$),
we have reached a contradiction.
\begin{figure}[h]
\includegraphics{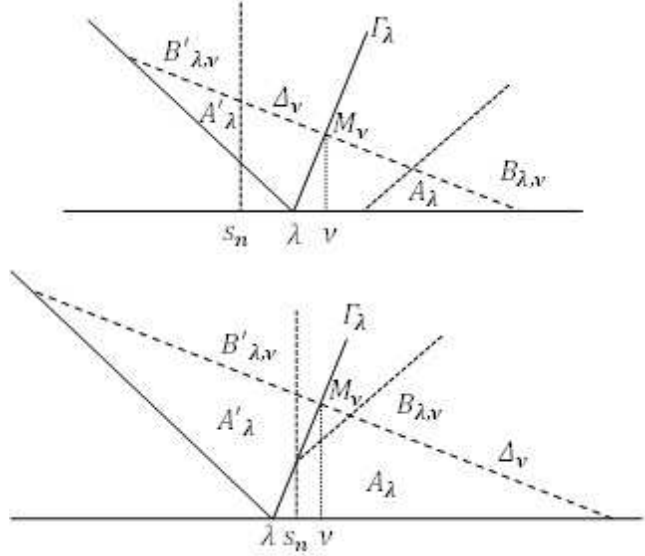}
\caption{The sets $A_\lambda$, $A'_\lambda$, $B_{\lambda,\nu}$, $B'_{\lambda,\nu}$, and the lines $\Gamma_\lambda$, $\Delta_\nu$, in the case where $\lambda>s_\n$ and $\lambda<s_\n$. }
\label{fig}
\end{figure}

Next, we establish Lemma \ref{movingplane} in the case where $\theta \in [\frac{\pi}{4},\frac{\pi}{2})$, which is a little bit more involved. When $\theta=\frac{\pi}{4}$, it is clear that
\eqref{statem} is valid for each $\lambda\geq s_\n$. Otherwise, when $\theta \in (\frac{\pi}{4},\frac{\pi}{2})$, let $A'_\lambda:=\{(p',q')\in D'_\lambda: p'\leq s_\n\}$, and let $A_\lambda=\sigma_\lambda(A'_\lambda)$. Our first claim is that $m:=\inf_{A'_{s_\n+1}}y>0$. Indeed, proceeding as in the proof of Theorem \ref{corpain} (v), one can see that 
\[
\lim_{(x_1,x_2)\in A'_{x_\n+1},x_1\to-\infty}\frac{\sqrt{2}}{\sqrt{-x_1}}y(x_1,x_2)=1.
\] 
In addition, proceeding as in the proof of Lemma \ref{lll2}, we obtain that $\inf\{y(x_1,x_2): (x_1,x_2)\in A'_{s_\n+1}, \, s_\n-l\leq x_1\leq s_\n\}>0$, for every constant $l>0$. Thus, $m>0$. On the other hand, we have $\lim_{\lambda\to\infty}\sup\{ y(x_1,x_2): (x_1,x_2)\in A_\lambda\}=0$, since $\lim_{\lambda\to\infty}\inf\{ x_1: (x_1,x_2)\in A_\lambda\}=0$ (cf. Lemma \ref{expcvv}). As a consequence when $\lambda\geq s_\n+1$ is large enough, we have $y(p',q')\geq m>y(p,q)$, $\forall (p,q)\in A_\lambda$, and also $y(p',q')>y(p,q)$, $\forall (p,q)\in D_\lambda\setminus A_\lambda$, by definition of $s_\n$. This establishes that \eqref{statem} holds for $\lambda$ large enough.
Then, defining $\lambda_0$ as previously, we assume by contradiction that $\lambda_0\in\R$, and deduce in a similar way the existence of the sequences $\lambda_k$ and $(p_k,q_k)\in D_{\lambda_k}$.
We need to show that $(p_k,q_k)$ is bounded. For $\nu>\lambda$, let $M_\nu:=(\nu,\tan\theta(\nu-\lambda))\in\Gamma_\lambda$, and let $\Delta_\nu:=\{(x_1,x_2): x_2=\tan(\theta+\frac{\pi}{2})(x_1-\nu)+\tan \theta (\nu-\lambda)\}$ be the line parallel to $\n$ and passing through $M_\nu$. Let also $B'_{\lambda,\nu}:=\{(p',q')\in A'_\lambda: q'\geq\tan(\theta+\frac{\pi}{2})(p'-\nu)+\tan \theta (\nu-\lambda)\}$ be the subset of $A'_\lambda$ which is above $\Delta_\nu$, and $B_{\lambda,\nu}:=\sigma_\lambda(B'_{\lambda,\nu})$. Proceeding as previously, we can see that $\forall \nu>\lambda_0+2$, $\forall \lambda >\lambda_0-1$, we have $\inf_{B'_{\lambda,\nu}}y>m$ for some constant $m>0$, while $\lim_{\nu\to\infty}\sup \{y(x_1,x_2): (x_1,x_2)\in B_{\lambda,\nu}\}=0$.
As a consequence, for $\nu$ large enough and $\lambda>\lambda_0-1$, we have $y(p',q')\geq m> y(p,q)$, $\forall (p,q)\in B_{\lambda,\nu}$, and thus $(p_k,q_k)\notin B_{\lambda_k,\nu}$. Furthermore, since $p'_k\leq s_\n$ by Lemma \ref{lll2}, we have established the boundedness of $(p_k,q_k)$. To complete the proof we utilize the same arguments detailed in the case where $\theta\in (0,\frac{\pi}{4})$.

\end{proof}

Lemma \ref{movingplane} implies that $\forall \theta \in (0,\frac{\pi}{2})$, $\forall \lambda \in \R$, and $(p,q)\in \Gamma_{\lambda}$ with $q>0$, we have
$\frac{\partial \psi_{\lambda}}{\partial \n}(p,q)=2\frac{\partial y}{\partial \n}(p,q)>0$, where $\n=e^{i(\theta+\frac{\pi}{2})}$.
It follows that $y_{x_1}(x_1,x_2)\leq 0$, and $y_{x_2}(x_1,x_2)\geq 0$, $\forall x_1\in\R$, $\forall x_2\geq 0$.
Moreover, in the half-plane $x_2\geq 0$, $y_{x_1}$ and $y_{x_2}$ satisfy respectively $\Delta y_{x_1}\geq (x_1+6y^2)y_{x_1}$, and $\Delta y_{x_2}= (x_1+6y^2)y_{x_2}$, thus $y_{x_1}$ (resp. $y_{x_2}$) cannot vanish in the open half-plane $x_2>0$, since otherwise we would obtain by the maximum principle $y_{x_1}\equiv 0$ (resp. $y_{x_2}\equiv 0$). These situations are excluded by the fact that $y>0$ in the open half-plane $x_2>0$, and $y_{x_2}(x_1,0)>0$, $\forall x_1\in\R$. Therefore we have proved that $y_{x_1}(x_1,x_2)< 0$, $\forall x_1\in\R$, $\forall x_2>0$, and $y_{x_2}(x_1,x_2)> 0$, $\forall x_1,x_2\in\R$. Finally, setting $\tilde y_l(x_1,x_2)=y(x_1,x_2+l)$, we obtain by the Theorem of Ascoli, that up to a subsequence $l_k\to\infty$ , $\tilde y_{l_k}$ converges in $C^2_{\mathrm{loc}}$ to a nonnegative minimal solution $\tilde y_\infty$ of \eqref{painhom}. Furthermore, the monotonicity of $y$ along the $x_2$ direction implies that $\tilde y_\infty$ is independent of $x_2$. Thus, since $h$ is the only nonnegative minimal solution of \eqref{painhom} (cf. \cite[Theorem 1.3]{Clerc2017}), we deduce that $\tilde y_\infty(x_1,x_2)=h(x_1)$, and that $\lim_{l\to\infty} y(x_1,x_2+l)=h(x_1)$ is independent of the subsequence $l_k$. We also note that $|y(x_1,x_2)|<h(x_1)$, $\forall (x_1,x_2)\in \R^2$, from which Theorem \ref{corpain} (iv) follows.
This completes the proof of Theorem \ref{corpain}.
\end{proof}

\end{document}